\newtheorem{theorem}{Theorem}
\newtheorem{lemma}{Lemma}
\newtheorem{definition}{Definition}
\newtheorem{proposition}{Proposition}
\newtheorem{remark}{Remark}
\newenvironment{proof}{\begin{trivlist}
\item[]{\bf Proof. }}{\hspace*{\fill}$\rule{.3\baselineskip}{.35\baselineskip}$
\end{trivlist}}
\newenvironment{proof1}{
    \noindent {\bf Proof }}{\hfill$\Box$}
\newcommand{\Dscr}{\mathcal{D}}
\newcommand{\Xscr}{\mathcal{X}}
\newcommand{\Z}{\mathbb{Z}}
\newcommand{\eps}{\epsilon}
\newcommand{\norm}[1]{\left\|#1\right\|}
\newcommand{\tond}[1]{{\left(#1\right)}}
\newcommand{\quadr}[1]{{\left[#1\right]}}
\newcommand{\Ham}[2]{H^{(#1)}_{#2}}
\newcommand{\Poi}[2]{\{#1,#2\}}
\newcommand{\derp}[2]{{\frac{\partial #1}{\partial #2}}}
\newcommand{\ncamp}[1]{\Big\|#1\Big\|^\oplus}
\newcommand{\Zscr}{\mathcal{Z}}
\newcommand{\Pscr}{\mathcal{P}}
\newcommand{\K}{\mathcal K}
\newcommand{\Keff}{\K_{\rm eff}}
\newcommand{\Kres}{\K_{\rm res}}
\newcommand{\Res}{{\rm Res}}
\newcommand{\Cst}{\mathsf{C}}
\newcommand{\Lin}{\mathcal L}
\def\Chi{\Xscr}
\def\diam{\mathop{\rm diam}}
\def\Id{\mathbb I}
\begin{document}

\title{Approximation of small-amplitude weakly coupled oscillators\\
  with discrete nonlinear Schr\"{o}dinger equations}

\author{Dmitry Pelinovsky$^{1,2}$, Tiziano Penati$^{3}$, and Simone Paleari$^{3}$ \\
{\small $^{1}$
    Department of Mathematics, McMaster University, Hamilton, Ontario,
    Canada, L8S 4K1} \\ {\small $^{2}$ Department of Applied Mathematics, Nizhny Novgorod State Technical University,} \\
{\small 24 Minin Street, Nizhny Novgorod, 603950, Russia}\\
{\small $^{3}$ Department of Mathematics
    ``F.Enriques'', Milano University, via Saldini 50, Milano, Italy, {20133}} }

\date{\today}
\maketitle

\begin{abstract}
Small-amplitude weakly coupled oscillators of the Klein--Gordon
lattices are approximated by equations of the discrete nonlinear
Schr\"{o}dinger type.  We show how to justify this approximation by
two methods, which have been very popular in the recent
literature. The first method relies on a priori energy estimates and
multi-scale decompositions. The second method is based on a resonant
normal form theorem.  We show that although the two methods are
different in the implementation, they produce equivalent results as
the end product. We also discuss applications of the discrete nonlinear Schr\"{o}dinger
equation in the context of existence and stability of breathers of the Klein--Gordon lattice.
\end{abstract}

\section{Introduction}
\label{s:intro}

We consider the one-dimensional discrete Klein--Gordon (dKG) equation
with the hard quartic potential in the form
\begin{equation}
\ddot{x}_j + x_j + x_j^3 = \epsilon (x_{j+1}-2x_{j}+x_{j-1}),\quad
j\in\mathbb{Z},\label{KGlattice-initial}
\end{equation}
where $t\in \mathbb{R}$ is the evolution time, $x_j(t)\in \mathbb{R}$
is the horizontal displacement of the $j$-th particle in the
one-dimensional chain, and $\epsilon > 0$ is the coupling constant of
the linear interaction between neighboring particles. The dKG equation
(\ref{KGlattice-initial}) is associated with the conserved-in-time energy
\begin{equation}
H = \frac{1}{2} \sum_{j \in \mathbb{Z}} \dot{x}_j^2 + x_j^2 + \epsilon
(x_{j+1}-x_j)^2 + \frac{1}{4} \sum_{j \in \mathbb{Z}} x_j^4,
\label{energy-Hamiltonian}
\end{equation}
which is also the Hamiltonian function of the dKG equation
(\ref{KGlattice-initial}) written in the canonical variables $\{ x_j,
\dot{x}_j \}_{j \in \mathbb{Z}}$.  The initial-value problem for the dKG
equation (\ref{KGlattice-initial}) is globally well-posed in the sequence
space $\ell^2(\mathbb{Z})$, thanks to the coercivity of the energy $H$ in
(\ref{energy-Hamiltonian}) in $\ell^2(\mathbb{Z})$.

By using a scaling transformation
\begin{equation}
\label{scaling-transformation}
\tilde{x}_j(\tilde{t}) = (1 + 2 \epsilon)^{-1/2} x_j(t), \quad \tilde{t} = (1+2\epsilon)^{1/2} t,
\quad \tilde{\epsilon} = (1+2\epsilon)^{-1} \epsilon,
\end{equation}
and dropping the tilde notations, the dKG equation (\ref{KGlattice-initial}) can be rewritten
without the diagonal terms in the discrete Laplacian operator,
\begin{equation}
\ddot{x}_j + x_j + x_j^3 = \epsilon (x_{j+1}+x_{j-1}),\quad
j\in\mathbb{Z}.\label{KGlattice}
\end{equation}
Note that the values of $\epsilon$ in (\ref{KGlattice}) are now restricted to
the range $\left(0,\frac{1}{2} \right)$, because the map $\epsilon \to
(1+2\epsilon)^{-1} \epsilon$ is a diffeomorphism from $(0,\infty)$ to
$\left(0,\frac{1}{2} \right)$. This restriction does not represent a
limitation if we study solutions of the dKG equation for sufficiently small
values of $\epsilon$.

We consider the Cauchy problem for the dKG equation (\ref{KGlattice}) and we aim
at giving an approximation of its solutions by means of equations of the discrete
nonlinear Schr\"{o}dinger type, up to suitable time scales. This approach can be
useful in general, but it may have additional interest when particular classes of
solutions of the dKG equation (\ref{KGlattice}) are taken into account. In the case
of systems of weakly coupled oscillators, relevant objects are given by time-periodic
and spatially localized solutions called breathers.

Existence and stability of breathers have been studied in the dKG equation in
many recent works. In particular, exploring the limit of weak coupling between
the nonlinear oscillators, existence \cite{MA94} and stability
\cite{Bambusi1,Bambusi2} of the fundamental (single-site) breathers were established
(see also the recent works in \cite{PP14,PP15}).
More complicated multi-breathers were classified from the point of their spectral stability in
the recent works \cite{Archilla,KK09,PelSak}.  Nonlinear stability and instability of multi-site breathers were
recently studied in \cite{CKP15}.

If the oscillators have small amplitudes in addition to being weakly coupled,
the stability of multi-breathers in the dKG equation is related to the
stability of multi-solitons in the discrete nonlinear Schr\"{o}dinger (dNLS)
equation:
\begin{equation}
\label{dnls-introduction}
2 i \dot{a}_j + 3 |a_j|^2 a_j = a_{j+1} + a_{j-1}, \quad j \in \mathbb{Z},
\end{equation}
where $a_j(\epsilon t) \in \mathbb{C}$ is the envelope amplitude for the
linear harmonic $e^{it}$ supported by the linear dKG equation
(\ref{KGlattice}) with $\epsilon = 0$.  The relation between the dKG and dNLS
equations (\ref{KGlattice}) and (\ref{dnls-introduction}) was observed in
\cite{MJKA02} based on numerical simulations and was elaborated in
\cite{PelSak} with perturbation technique.

The present contribution addresses the justification of the dNLS equation
(\ref{dnls-introduction}), and its generalizations, for the weakly coupled small-amplitude oscillators
of the dKG equation (\ref{KGlattice}). In fact, we are going to explore two
alternative but complementary points of view on the justification process,
which enables us to establish rigorous bounds on the error terms over the time
scale, during which the dynamics of the dNLS equation
(\ref{dnls-introduction}) is observed. 

The first method in the justification of the dNLS equation
(\ref{dnls-introduction}) for small-amplitude weakly coupled oscillators of
the dKG equation (\ref{KGlattice}) is based on a priori energy estimates and
elementary continuation arguments. This method was used in the derivation of
the dNLS equation \cite{DumaJames} and the Korteweg--de Vries equation
\cite{BP06,DumaPel,Pego,SW00} in a similar context of the Fermi--Pasta--Ulam
lattice. The energy method is based on the decomposition of the solution into
the leading-order multi-scale approximation and the error term. The error term
is controlled by integrating the dKG equation with a small residual term over relevant time scale.
The energy method is computationally efficient and simple enough for most
practical applications.

The second method is based on the resonant normal form theorem, which
transforms the given Hamiltonian of the dKG equation to a simpler form by
means of near-identity canonical transformations \cite{BambusiPisa,G03}.
The normal form, once it is obtained in the sense of an abstract theorem, 
does not require any additional work for the derivation and justification of both the dNLS
equation and its generalizations, which appear immediately in
the corresponding relevant regimes. Starting from the works
  \cite{GPP12,GPP13}, the normal form approach for the dKG equation was
recently elaborated in \cite{PP14} and applied in \cite{PP15} for a stability result.

We hope that the present discussion of the two equivalent methods can motivate
readers for the choice of a suitable analytical technique in the justification
analysis of similar problems of lattice dynamics. It is our understanding that
the two methods are equivalent with respect to the results (error
estimates, time scales) but have some differences in the way one proves
such results.

Besides justifying the dNLS equation (\ref{dnls-introduction}) on the time scale $\mathcal{O}(\epsilon^{-1})$, 
we also extend the error bounds on the longer time intervals of $\mathcal{O}(|\log(\epsilon)| \epsilon^{-1})$. 
Similar improvements were reported in various other contexts of the justification analysis 
\cite{DumaJames,Holmer}. Within our context, we will show in the end of this paper how to transfer
the known results on existence and stability of multi-solitons in the dNLS
equation (\ref{dnls-introduction}) to the approximate results on existence and
stability of multi-breathers in the dKG equation (\ref{KGlattice}). In
particular, we will address the relevant approximations for the dKG breathers 
on the extended time intervals obtained 
from the analytical results on the asymptotic stability of
dNLS solitons \cite{CT09,KPS09,MP12}, quasi-periodic localized solutions
\cite{Cuccagna,Maeda}, internal modes of dNLS solitons \cite{PelSak11}, and
nonlinear instability of multi-site solitons \cite{KPS15}.

We finish introduction with a review of related results.  Small-amplitude
breathers of the dKG and dNLS equations were approximated with the continuous
nonlinear Schr\"{o}dinger equation in the works \cite{BPP10,BP10,PP12}. An alternative derivation of the
continuous nonlinear Schr\"{o}dinger equation was discussed in the context of
the Fermi--Pasta--Ulam lattice \cite{GM1,GM2,GHM,Schneider}.  In the opposite
direction, derivation and justification of the dNLS equation from a continuous
nonlinear Schr\"{o}dinger equation with a periodic potential were developed in
the works \cite{PSM,PS}.  Finally, justification of the popular variational
approximation for multi-solitons of the dNLS equation in the limit of weak
coupling between the nonlinear oscillators is reported in \cite{Chong}.

The paper is organized as follows. Section 2 reports the justification results
obtained from the energy method and multi-scale expansions. Section 3 reports
the justification results obtained from the normal form theorem.  Section 4
discusses applications of these results for existence and stability of breathers 
in the dKG equation.

\section{Justification of the dNLS equation with the energy method}
\label{s:energy-meth}

In what follows, we consider the limit of weak coupling between the nonlinear
oscillators, where $\epsilon$ is a small positive parameter. We also consider
the small-amplitude oscillations starting with small-amplitude initial
data. Hence, we use the scaling transformation $x_j = \rho^{1/2} \xi_j$, where
$\rho$ is another small positive parameter. Incorporating both small
parameters, we rewrite the dKG equation (\ref{KGlattice}) in the equivalent
form
\begin{equation}
\ddot{\xi}_j + \xi_j + \rho \xi_j^3 = \epsilon (\xi_{j+1}+\xi_{j-1}),\quad
j\in\mathbb{Z}. \label{KG}
\end{equation}

The standard approximation of multi-breathers in the dKG equation
(\ref{KG}) with multi-solitons of the dNLS equation (\ref{dnls-introduction}) corresponds to the
balance $\rho = \epsilon$.  In Sections 2.1--2.3, we
generalize the standard dNLS approximation by assuming that
$\epsilon^2 \ll \rho \leq \epsilon$.  In Section 2.4, we
discuss further generalizations when $\rho$ belongs to the asymptotic
range $\epsilon^3 \ll \rho \leq \epsilon^2$.

\subsection{Preliminary estimates}

To recall the standard dNLS approximation, we define the slowly
varying approximate solution of the dKG equation (\ref{KG}) in the
form
\begin{equation}
X_j(t) = a_{j}(\epsilon t)e^{it} +
\bar{a}_{j}(\epsilon t) e^{-it}.
\label{envelope-approximation}
\end{equation}
Substituting the leading-order solution (\ref{envelope-approximation})
to the dKG equation (\ref{KG}) and removing the resonant terms $e^{\pm
  i t}$ at the leading order of $\mathcal{O}(\epsilon)$, we obtain the
dNLS equation in the form
\begin{equation}
2 i \dot{a}_j + 3 \nu |a_j|^2 a_j = a_{j+1}+a_{j-1},\quad j\in\mathbb{Z},
\label{NLSlattice}
\end{equation}
where the dot denotes the derivative with respect to the slow time
$\tau = \epsilon t$ and the parameter $\nu = \rho/\epsilon$ is defined
in the asymptotic range $\epsilon \ll \nu \leq 1$.

With the account of the dNLS equation (\ref{NLSlattice}), the
leading-order solution (\ref{envelope-approximation}) substituted into
the dKG equation (\ref{KG}) produces the residual terms in the form
\begin{equation}
{\rm Res}_j(t) := \rho \left( a_j^3 e^{3 i t} + \bar{a}_j^3 e^{-3it} \right) +
\epsilon^2 \left( \ddot{a}_j e^{i t} + \ddot{\bar{a}}_j e^{-it} \right).
\label{residual}
\end{equation}
The second residual term is resonant but occurs in the higher order
$\mathcal{O}(\epsilon^2)$, which is not an obstacle in the
justification analysis. The first residual term is non-resonant but it
occurs at the leading order of $\mathcal{O}(\rho) \gg
\mathcal{O}(\epsilon^2)$.  Therefore, the first term needs to be
removed, which is achieved with the standard near-identity
transformation. Namely, we extend the leading-order approximation
(\ref{envelope-approximation}) to the form
\begin{equation}
X_j(t) = a_{j}(\epsilon t)e^{it} +
\bar{a}_{j}(\epsilon t) e^{-it} + \frac{1}{8} \rho  \left( a_j^3(\epsilon t) e^{3it} +
\bar{a}_j^3(\epsilon t) e^{-3 it} \right).
\label{leading-order}
\end{equation}
For simplicity, we do not mention that $X_j$ depends on $\epsilon$ and
$\rho$.  Substituting the approximation (\ref{leading-order}) into the
dKG equation (\ref{KG}), we obtain the new residual terms in the form
\begin{eqnarray}
\nonumber {\rm Res}_j(t) & := & \epsilon^2 \left( \ddot{a}_j e^{i t} +
\ddot{\bar{a}}_j e^{-it} \right) - \frac{1}{8} \epsilon \rho \left(
(a_{j+1}^3 + a_{j-1}^3)e^{3 i t} +
(\bar{a}_{j+1}^3 + \bar{a}_{j-1}^3)e^{-3 i t} \right)
\\ \nonumber & \phantom{t} & + \frac{3}{8} \rho^2 \left( a_j e^{i t} +
\bar{a}_j e^{-it} \right)^2 \left( a_j^3 e^{3 it} + \bar{a}_j^3 e^{-3
  it} \right) + \frac{9}{4} \epsilon \rho \left( i a_j^2 \dot{a}_j e^{3
  it} - i \bar{a}_j^2 \dot{\bar{a}}_j e^{-3 it} \right) \\ \nonumber &
\phantom{t} & + \frac{3}{64} \rho^3\left( a_j e^{i t} + \bar{a}_j
e^{-it} \right)\left( a_j^3 e^{3 it} + \bar{a}_j^3 e^{-3 it} \right)^2
+ \frac{1}{8} \epsilon^2 \rho \left(\ddot{a^3_j} e^{3 it} +
\ddot{\overline{a}^3_j} e^{-3 it}\right)\\ & \phantom{t} & + \frac{1}{512}
\rho^4 \left( a_j^3 e^{3 it} + \bar{a}_j^3 e^{-3 it} \right)^3.
\label{residual-next}
\end{eqnarray}
Note that all time derivatives of $a_j$ in the residual term (\ref{residual-next})
can be eliminated from the dNLS equation (\ref{NLSlattice})
provided that $\{ a_j \}_{j \in \mathbb{Z}}$ is a twice differentiable sequence with respect to time.
For all purposes we need, it is sufficient to consider the sequence space
$\ell^2(\mathbb{Z})$. Hence we denote
the sequence $\{a_j\}_{j \in \mathbb{Z}}$ in $l^2(\mathbb{Z})$ by ${\bf a}$.

The next results give preliminary estimates on global solutions of the
dNLS equation (\ref{NLSlattice}), the leading-order approximation
(\ref{leading-order}), and the residual term (\ref{residual-next}).

\begin{lemma}
For every ${\bf a}_0 \in \ell^2(\mathbb{Z})$ and every $\nu \in
\mathbb{R}$, there exists a unique global solution ${\bf a}(t)$ of the
dNLS equation (\ref{NLSlattice}) in $\ell^2(\mathbb{Z})$ for every $t
\in \mathbb{R}$ such that ${\bf a}(0) = {\bf a}_0$.  Moreover, the
solution ${\bf a}(t)$ is smooth in $t$ and $\| {\bf a}(t) \|_{\ell^2}
= \| {\bf a}_0 \|_{\ell^2}$.
\label{lemma-existence}
\end{lemma}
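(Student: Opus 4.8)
The plan is to treat the dNLS equation (\ref{NLSlattice}) as an abstract ODE $\dot{\bf a} = F({\bf a})$ on the Banach space $\ell^2(\Z)$, with vector field $F({\bf a})_j = \frac{1}{2i}\left(a_{j+1}+a_{j-1} - 3\nu |a_j|^2 a_j\right)$, and to combine local well-posedness via the contraction principle with an a priori bound from the conserved $\ell^2$ norm to obtain a global flow. First I would check that $F$ maps $\ell^2(\Z)$ into itself and is locally Lipschitz. The linear part is bounded since the shift operators have unit norm, so $\norm{a_{j+1}+a_{j-1}}_{\ell^2} \le 2\norm{\bf a}_{\ell^2}$. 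For the cubic part I would use the embedding $\ell^2(\Z) \hookrightarrow \ell^\infty(\Z)$ with $\norm{\bf a}_{\ell^\infty}\le\norm{\bf a}_{\ell^2}$, together with the pointwise inequality $\left|\,|a|^2a-|b|^2b\,\right|\le C(|a|^2+|b|^2)|a-b|$; this yields both $\norm{|a_j|^2 a_j}_{\ell^2}\le\norm{\bf a}_{\ell^2}^3$ and a Lipschitz estimate on bounded subsets of $\ell^2(\Z)$. By the Picard--Lindel\"of theorem in Banach spaces this produces a unique maximal solution ${\bf a}\in C^1\big((-T_-,T_+);\ell^2(\Z)\big)$ with ${\bf a}(0)={\bf a}_0$.

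The second step is the conservation law $\norm{{\bf a}(t)}_{\ell^2}=\norm{{\bf a}_0}_{\ell^2}$. I would compute $\frac{d}{dt}\norm{\bf a}_{\ell^2}^2 = 2\sum_j \re(\bar a_j \dot a_j)$ and substitute the equation. The cubic term contributes $-3\nu|a_j|^4/(2i)$, which is purely imaginary and drops out of the real part. For the coupling term, the decisive point is that after an index shift $\sum_j \bar a_j a_{j+1}$ and $\sum_j \bar a_j a_{j-1}$ are complex conjugates of one another, so their sum is real and, divided by $2i$, becomes purely imaginary; hence its real part vanishes as well. This gives $\frac{d}{dt}\norm{\bf a}_{\ell^2}=0$.

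The conserved norm then closes the continuation argument: since $\norm{{\bf a}(t)}_{\ell^2}$ remains equal to $\norm{{\bf a}_0}_{\ell^2}$ throughout the maximal interval, the solution cannot leave any ball in finite time, so the standard blow-up alternative forces $T_\pm=+\infty$ and the solution is global. Finally, smoothness in $t$ follows by bootstrapping: $F$ is a polynomial, hence $C^\infty$ (indeed real-analytic), map on $\ell^2(\Z)$, so from $\dot{\bf a}=F({\bf a})\in C^0$ one gets ${\bf a}\in C^1$, then $\ddot{\bf a}=DF({\bf a})[\dot{\bf a}]\in C^0$ gives ${\bf a}\in C^2$, and inductively ${\bf a}\in C^\infty$. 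I do not expect a genuine obstacle: the only delicate point is the index-shift cancellation in the conservation-law computation, which relies on the absolute summability afforded by working in $\ell^2(\Z)$ on the full lattice to justify rearranging the series.
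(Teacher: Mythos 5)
Your proposal is correct and follows essentially the same route as the paper: the contraction principle on $\ell^2(\mathbb{Z})$ (using boundedness of the shift operators, the Banach algebra property, and the $\ell^2$--$\ell^\infty$ embedding) for local well-posedness and smoothness, plus conservation of the $\ell^2$ norm for global continuation. You merely fill in details the paper leaves implicit, such as the index-shift cancellation in the conservation computation and the bootstrap for $C^\infty$ regularity, both of which are done correctly.
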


\begin{proof}
Local well-posedness and smoothness of the local solution ${\bf a}$
with respect to time variable $t$ follow from the contraction principle
applied to an integral version of the dNLS equation (\ref{NLSlattice}).
The contraction principle can be applied because
the discrete Laplacian operator is a bounded operator on
$\ell^2(\mathbb{Z})$, whereas $\ell^2(\mathbb{Z})$ is a Banach algebra
with respect to pointwise multiplication and the $\ell^2(\mathbb{Z})$ norm
is an upper bound for the $\ell^{\infty}(\mathbb{Z})$ norm of a sequence.
Global continuation of the local solution ${\bf a}$ follows
from the $\ell^2(\mathbb{Z})$ conservation of the dNLS equation
(\ref{NLSlattice}).
\end{proof}

\begin{lemma}
For every ${\bf a}_0 \in \ell^2(\mathbb{Z})$, there exists a positive
constant $C_X(\| {\bf a}_0 \|_{\ell^2})$ (that depends on $\| {\bf a}_0 \|_{\ell^2}$)
such that for every $\rho \in (0,1]$ and every $t \in \mathbb{R}$, the leading-order approximation
  (\ref{leading-order}) is estimated by
\begin{equation}
\| {\bf X}(t) \|_{\ell^2} + \| \dot{\bf X}(t) \|_{\ell^2} \leq C_X(\| {\bf
  a}_0 \|_{\ell^2}).
\label{leading-order-estimate}
\end{equation}
\label{lemma-leading-approximation}
\end{lemma}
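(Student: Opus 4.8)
The plan is to bound the two contributions $\|\mathbf{X}(t)\|_{\ell^2}$ and $\|\dot{\mathbf{X}}(t)\|_{\ell^2}$ separately, using three ingredients: the conservation law $\|\mathbf{a}(t)\|_{\ell^2} = \|\mathbf{a}_0\|_{\ell^2}$ from Lemma \ref{lemma-existence}, the Banach algebra property of $\ell^2(\mathbb{Z})$ under pointwise multiplication (so that $\|\mathbf{a}^3\|_{\ell^2} \le \|\mathbf{a}\|_{\ell^2}^3$ and likewise for mixed products such as $|\mathbf{a}|^2\mathbf{a}$), and the fact that the phase factors $e^{\pm it}$ and $e^{\pm 3it}$ have unit modulus and therefore do not affect $\ell^2$ norms.

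First I would treat $\mathbf{X}$. Applying the triangle inequality termwise to the definition (\ref{leading-order}) and discarding the phases gives $\|\mathbf{X}(t)\|_{\ell^2} \le 2\|\mathbf{a}\|_{\ell^2} + \tfrac14\rho\,\|\mathbf{a}^3\|_{\ell^2} \le 2\|\mathbf{a}_0\|_{\ell^2} + \tfrac14\|\mathbf{a}_0\|_{\ell^2}^3$, where I used the Banach algebra bound, conservation of the $\ell^2$ norm, and $\rho \le 1$. This already has the required form and uses nothing about $\nu$.

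The substantive part is $\dot{\mathbf{X}}$. Differentiating (\ref{leading-order}) in $t$ and recalling that $a_j = a_j(\epsilon t)$, so that each time derivative of the envelope produces a factor $\epsilon$, I would split the result into two groups: the \emph{algebraic} terms involving only $\mathbf{a}$ (from differentiating the phases) and the \emph{dynamical} terms involving the slow-time derivative $\dot{\mathbf{a}}$. The algebraic group is bounded exactly as above, yielding $2\|\mathbf{a}_0\|_{\ell^2} + \tfrac34\|\mathbf{a}_0\|_{\ell^2}^3$ after using $\rho\le 1$ and the conservation law. For the dynamical group, the key observation — and the only place where care with the parameters is needed — is that $\dot{\mathbf{a}}$ never appears alone but always multiplied by $\epsilon$. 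Rather than estimating $\|\dot{\mathbf{a}}\|_{\ell^2}$ and then multiplying by $\epsilon$ (which would introduce the possibly large factor $\nu$), I would multiply the dNLS equation (\ref{NLSlattice}) through by $\epsilon$ to obtain $2i\epsilon\dot{a}_j = \epsilon(a_{j+1}+a_{j-1}) - 3\rho\,|a_j|^2 a_j$, using $\epsilon\nu = \rho$. Since the shift operators are isometries on $\ell^2(\mathbb{Z})$, this gives $\|\epsilon\dot{\mathbf{a}}\|_{\ell^2} \le \epsilon\|\mathbf{a}_0\|_{\ell^2} + \tfrac32\rho\,\|\mathbf{a}_0\|_{\ell^2}^3$, uniformly in $\epsilon\in(0,\tfrac12)$ and $\rho\in(0,1]$.

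Finally I would collect the dynamical terms: the contribution $\epsilon\dot{a}_j e^{it}$ and its conjugate are controlled directly by the bound on $\|\epsilon\dot{\mathbf{a}}\|_{\ell^2}$, while the cubic contribution $\tfrac38\rho\epsilon\,a_j^2\dot{a}_j e^{3it}$ is controlled by $\tfrac34\rho\,\|\mathbf{a}\|_{\ell^2}^2\,\|\epsilon\dot{\mathbf{a}}\|_{\ell^2}$ via the Banach algebra property, again uniformly because $\rho\le 1$. Summing the algebraic and dynamical contributions and absorbing all numerical factors produces a single constant of the form $c_1\|\mathbf{a}_0\|_{\ell^2} + c_2\|\mathbf{a}_0\|_{\ell^2}^3 + c_3\|\mathbf{a}_0\|_{\ell^2}^5$, which is exactly the claimed bound $C_X(\|\mathbf{a}_0\|_{\ell^2})$. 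The main obstacle is entirely the bookkeeping of the parameter dependence: one must check that every occurrence of $\dot{\mathbf{a}}$ carries a compensating power of $\epsilon$, so that the products $\epsilon\nu = \rho$ and $\rho\epsilon\nu = \rho^2$ stay bounded, rather than naively estimating $\|\dot{\mathbf{a}}\|_{\ell^2}$ by itself and losing uniformity through the factor $\nu$.
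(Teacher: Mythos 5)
Your proof is correct and follows the same route as the paper, whose entire proof is the one-line remark that the bound follows from the Banach algebra property of $\ell^2(\mathbb{Z})$ and Lemma \ref{lemma-existence}; you have simply made that explicit, including the one genuinely delicate point (bounding $\epsilon\dot{\mathbf{a}}$ via the dNLS equation itself with $\epsilon\nu=\rho$, rather than estimating $\dot{\mathbf{a}}$ alone and losing uniformity in $\nu$). Your parameter bookkeeping and the resulting constant $C_X = c_1\|\mathbf{a}_0\|_{\ell^2} + c_2\|\mathbf{a}_0\|_{\ell^2}^3 + c_3\|\mathbf{a}_0\|_{\ell^2}^5$ are exactly what the paper's terse proof presupposes.
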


\begin{proof}
The result follows from the Banach algebra property of $\ell^2(\mathbb{Z})$
and the global existence result of Lemma \ref{lemma-existence}.
\end{proof}

\begin{lemma}
Assume that $\rho \leq \epsilon$.  For every ${\bf a}_0 \in
\ell^2(\mathbb{Z})$, there exists a positive $\epsilon$-independent
constant $C_R(\| {\bf a}_0 \|_{\ell^2})$ (that depends on $\| {\bf
  a}_0 \|_{\ell^2}$) such that for every $\epsilon \in (0,1]$ and
  every $t \in \mathbb{R}$, the residual term in (\ref{residual-next})
  is estimated by
\begin{equation}
\| {\bf Res}(t) \|_{\ell^2} \leq C_R(\| {\bf a}_0 \|_{\ell^2}) \epsilon^2.
\label{residual-estimate}
\end{equation}
\label{lemma-residual}
\end{lemma}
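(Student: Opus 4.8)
The plan is to bound the $\ell^2$ norm of $\mathbf{Res}(t)$ term by term, observing that under the standing hypotheses $\rho \le \epsilon$ and $\epsilon \in (0,1]$ --- so that also $\rho \in (0,1]$ and $\nu = \rho/\epsilon \le 1$ --- every one of the seven groups of terms in (\ref{residual-next}) carries a scalar prefactor that is at most $\epsilon^2$ up to a fixed numerical constant. Indeed $\epsilon \rho \le \epsilon^2$, $\rho^2 \le \epsilon^2$, $\epsilon^2 \rho \le \epsilon^2$, $\rho^3 \le \rho^2 \le \epsilon^2$, and $\rho^4 \le \rho^2 \le \epsilon^2$, while the first group is already of order $\epsilon^2$. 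Thus it suffices to show that the sequence-valued factor multiplying each prefactor has an $\ell^2$ norm bounded, uniformly in $t \in \mathbb{R}$ and in $\epsilon$, by a constant depending only on $\| \mathbf{a}_0 \|_{\ell^2}$.

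For the factors that are purely algebraic in $\mathbf{a}$ and its conjugate, I would use the two structural facts already available: the conservation law $\| \mathbf{a}(t) \|_{\ell^2} = \| \mathbf{a}_0 \|_{\ell^2}$ from Lemma \ref{lemma-existence}, and the Banach algebra property of $\ell^2(\mathbb{Z})$ under pointwise multiplication. Since each $e^{ikt}$ has modulus one and the $\ell^2$ norm is invariant under the index shift $j \mapsto j \pm 1$, every monomial of degree $d$ in the entries $a_j, \bar a_j$ (and their shifts) has $\ell^2$ norm at most $\| \mathbf{a}(t) \|_{\ell^2}^d = \| \mathbf{a}_0 \|_{\ell^2}^d$. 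This immediately controls the quintic term (degree five), the septic term (degree seven), the nonic term (degree nine), and the cubic shift term $a_{j\pm1}^3$.

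The only point requiring a little more care is the control of the time derivatives $\dot a_j$, $\ddot a_j$, and $\ddot{a_j^3}$, which are taken with respect to the slow time $\tau = \epsilon t$. Here I would not estimate them dynamically but algebraically, by reading them off the dNLS equation (\ref{NLSlattice}): from $2 i \dot a_j = a_{j+1} + a_{j-1} - 3\nu |a_j|^2 a_j$ and the Banach algebra property one gets $\| \dot{\mathbf{a}} \|_{\ell^2} \le \| \mathbf{a}_0 \|_{\ell^2} + \tfrac{3}{2}\nu \| \mathbf{a}_0 \|_{\ell^2}^3$, which is uniform in $t$ and, thanks to $\nu \le 1$, independent of $\epsilon$. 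Differentiating (\ref{NLSlattice}) once more in $\tau$ expresses $\ddot{\mathbf{a}}$ through $\mathbf{a}$ and $\dot{\mathbf{a}}$ (the cubic term contributes $3\nu\,\tfrac{d}{d\tau}(|a_j|^2 a_j)$, a sum of products of $a$'s and $\dot a$'s), while $\ddot{a_j^3}$ expands by the product rule into the same ingredients; applying the algebra property again yields bounds that are polynomial in $\| \mathbf{a}_0 \|_{\ell^2}$ and once more uniform in $t$ and $\epsilon$.

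Collecting the prefactor bounds with the norm bounds on the sequence factors, I would sum the seven contributions, factor out $\epsilon^2$, and define $C_R(\| \mathbf{a}_0 \|_{\ell^2})$ as the resulting $\epsilon$- and $t$-independent polynomial expression in $\| \mathbf{a}_0 \|_{\ell^2}$. I do not expect a genuine obstacle: the whole argument reduces to the algebra property together with the two facts that the $\ell^2$ norm of $\mathbf{a}$ is conserved and that the dNLS vector field is bounded. The single bookkeeping point that must not be overlooked is the joint use of $\rho \le \epsilon$ and $\rho, \epsilon \le 1$ to absorb every prefactor into $\epsilon^2$, and the observation that, because the derivative bounds come from the $\epsilon$-independent equation (\ref{NLSlattice}) in slow time, the constant $C_R$ inherits no hidden dependence on $\epsilon$ or $t$.
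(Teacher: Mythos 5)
Your proposal is correct and follows exactly the route the paper takes: its proof of Lemma \ref{lemma-residual} is a one-line appeal to the Banach algebra property of $\ell^2(\mathbb{Z})$ together with the global existence, smoothness, and norm conservation of ${\bf a}(t)$ from Lemma \ref{lemma-existence}, which is precisely what you spell out term by term. Your explicit bookkeeping of the prefactors (using $\rho \leq \epsilon \leq 1$ and $\nu \leq 1$) and the algebraic bounding of $\dot{\bf a}$, $\ddot{\bf a}$ via the dNLS equation (\ref{NLSlattice}) are the details the paper leaves implicit, carried out correctly.
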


\begin{proof}
The result follows from the Banach algebra property of
$\ell^2(\mathbb{Z})$, as well as from the global existence and
smoothness of the solution ${\bf a}(t)$ of the dNLS equation
(\ref{NLSlattice}) in Lemma \ref{lemma-existence}.
\end{proof}

\subsection{Justification of the dNLS equation on the dNLS time scale}

The main result of this section is the following justification
theorem.

\begin{theorem}
Assume that $\rho$ is defined in the asymptotic range $\epsilon^2 \ll
\rho \leq \epsilon$.  For every $\tau_0 > 0$, there is a small
$\epsilon_0 > 0$ and positive constants $C_0$ and $C$ such that for
every $\epsilon \in (0,\epsilon_0)$, for which the initial data
satisfies
\begin{equation}
\label{bound-initial}
\| \textbf{$\xi$}(0) - {\bf X}(0) \|_{l^2} + \| \dot{\textbf{$\xi$}}(0) - \dot{\bf
  X}(0) \|_{l^2} \leq C_0 \rho^{-1} \epsilon^2,
\end{equation}
the solution of the dKG equation (\ref{KG})
satisfies for every $t \in [-\tau_0 \rho^{-1},\tau_0 \rho^{-1}]$,
\begin{equation}
\label{bound-final}
\| \textbf{$\xi$}(t) - {\bf X}(t) \|_{l^2} + \| \dot{\textbf{$\xi$}}(t) - \dot{\bf
  X}(t) \|_{l^2} \leq C \rho^{-1} \epsilon^2.
\end{equation}
\label{theorem-justification}
\end{theorem}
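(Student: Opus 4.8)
The plan is to set up an energy estimate for the error and to close it with a continuation (bootstrap) argument. Write the solution of the dKG equation (\ref{KG}) as $\xi_j = X_j + r_j$, where ${\bf X}$ is the leading-order approximation (\ref{leading-order}) and ${\bf r}$ is the error. Since ${\bf X}$ solves (\ref{KG}) up to the residual (\ref{residual-next}), subtracting the two equations and expanding $(X_j+r_j)^3 - X_j^3 = 3X_j^2 r_j + 3X_j r_j^2 + r_j^3$ gives the error equation
\[
\ddot r_j + r_j - \epsilon(r_{j+1}+r_{j-1}) + 3\rho X_j^2 r_j + 3\rho X_j r_j^2 + \rho r_j^3 = -{\rm Res}_j .
\]
I would then introduce the error energy
\[
E(t) = \frac12\sum_{j\in\mathbb Z}\left(\dot r_j^2 + r_j^2\right) - \epsilon\sum_{j\in\mathbb Z} r_j r_{j+1},
\]
the quadratic energy of the linear part of the error equation. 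For $\epsilon\in(0,\tfrac12)$ this $E$ is coercive, namely $\tfrac{1-2\epsilon}{2}\big(\|{\bf r}\|_{\ell^2}^2 + \|\dot{\bf r}\|_{\ell^2}^2\big)\le E$, so that $S(t):=\sqrt{E(t)}$ controls $\|{\bf r}\|_{\ell^2} + \|\dot{\bf r}\|_{\ell^2}$ up to a constant.

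Next I would differentiate $E$ along the flow. The quadratic part is conserved by the linear dynamics, so all of its contributions cancel and only the cubic, quartic, and forcing terms survive, leaving
\[
\frac{dE}{dt} = -3\rho\sum_j X_j^2 r_j\dot r_j - 3\rho\sum_j X_j r_j^2\dot r_j - \rho\sum_j r_j^3\dot r_j - \sum_j {\rm Res}_j\,\dot r_j .
\]
Each term is estimated by Cauchy--Schwarz together with the Banach algebra property of $\ell^2(\mathbb Z)$ and the bound $\|\cdot\|_{\ell^\infty}\le\|\cdot\|_{\ell^2}$. Using Lemma~\ref{lemma-leading-approximation} to bound $\|{\bf X}\|_{\ell^\infty}\le C_X$ and Lemma~\ref{lemma-residual} to bound $\|{\bf Res}\|_{\ell^2}\le C_R\epsilon^2$, this produces a differential inequality of the form
\[
\frac{dS}{dt}\le A\epsilon^2 + B\rho\,S + B\rho\,S^2 + B\rho\,S^3 ,
\]
with $A,B$ depending only on $\|{\bf a}_0\|_{\ell^2}$: the linear-in-$S$ term comes from the variable coefficient $3\rho X_j^2$, the quadratic and cubic terms from the genuine nonlinearity, and the $A\epsilon^2$ term from the residual forcing.

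Finally I would close the estimate by continuation. Fix a bootstrap constant $K$ and assume $S(t)\le K\rho^{-1}\epsilon^2$ on a maximal interval. Under this assumption the nonlinear terms are harmless: $\rho S^2\le K^2(\epsilon^2/\rho)\epsilon^2$ and $\rho S^3\le K^3(\epsilon^2/\rho)^2\epsilon^2$, both of which are $\le\epsilon^2$ once $\epsilon$ is small enough, precisely because $\epsilon^2\ll\rho$ by hypothesis. The inequality then reduces to $\frac{dS}{dt}\le B\rho\,S + C\epsilon^2$, and Gronwall's lemma over $|t|\le\tau_0\rho^{-1}$ gives $S(t)\le e^{B\tau_0}S(0) + C'\rho^{-1}\epsilon^2$. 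Since (\ref{bound-initial}) and coercivity give $S(0)\le C''C_0\rho^{-1}\epsilon^2$, the output is $S(t)\le C^\ast\rho^{-1}\epsilon^2$ with $C^\ast$ independent of $K$. Choosing $K=2C^\ast$ strictly improves the assumed bound, so by continuity the interval extends to all of $[-\tau_0\rho^{-1},\tau_0\rho^{-1}]$, and converting back through coercivity yields (\ref{bound-final}).

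The main obstacle is organizing this last step so that it genuinely closes. The crucial point is the ordering of quantifiers: one must fix $K$ from the Gronwall constant, which has to be independent of $K$, and only afterwards shrink $\epsilon_0$ so that the nonlinear remainders are absorbed, which is exactly where $\epsilon^2\ll\rho$ is used. The accounting is tight because integrating the residual-level forcing of size $\mathcal{O}(\epsilon^2)$ over the time scale $\rho^{-1}$ produces precisely the target size $\rho^{-1}\epsilon^2$, while the coefficient $\rho$ of the linear term over the same time scale contributes only the $\mathcal{O}(1)$ exponent $B\tau_0$.
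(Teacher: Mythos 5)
Your proposal is correct and follows essentially the same route as the paper's proof: the same decomposition $\xi_j = X_j + y_j$, a coercive quadratic energy for the error controlled via Lemmas \ref{lemma-leading-approximation} and \ref{lemma-residual}, a differential inequality of the form $|\dot S| \leq C_R\epsilon^2 + \mathcal{O}(\rho)(S+S^2+S^3)$, and a bootstrap plus Gronwall continuation over $|t| \leq \tau_0\rho^{-1}$ closed by the assumption $\epsilon^2 \ll \rho$. The only cosmetic difference is that the paper includes the term $3\rho X_j^2 y_j^2$ inside the energy (\ref{energy}), so that its time derivative produces $3\rho X_j \dot{X}_j y_j^2$ rather than your perturbative term $-3\rho\sum_j X_j^2 r_j \dot r_j$; both variants yield a linear term with $\mathcal{O}(\rho)$ coefficient and hence the same final estimate.
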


\begin{remark}
If $\rho = \epsilon$, the justification result of Theorem
\ref{theorem-justification} guarantees that the dynamics of
small-amplitude oscillators follows closely the dynamics of the dNLS
equation (\ref{dnls-introduction}) on the dNLS time
scale $[-\tau_0,\tau_0]$ for the variable $\tau = \epsilon t$.
\end{remark}

\begin{remark}
If $\rho=\epsilon^{8/5}$, the error term in (\ref{bound-final}) satisfies the
$\mathcal{O}_{\ell^2}(\epsilon^{2/5})$ bound. The error term is
controlled on the longer time scale $[-\tau_0 \epsilon^{-3/5},\tau_0 \epsilon^{-3/5}]$
for the variable $\tau = \epsilon t$ of the dNLS equation
(\ref{NLSlattice}) with $\nu = \epsilon^{3/5}$.
\end{remark}

To develop the justification analysis, we write $\textbf{$\xi$}(t) = {\bf
  X}(t) + {\bf y}(t)$, where ${\bf X}(t)$ is the leading-order
approximation (\ref{leading-order}) and ${\bf y}(t)$ is the error
term. Substituting the decomposition into the lattice equation
(\ref{KG}), we obtain the evolution problem for the error term:
\begin{equation}
\ddot{y}_j + y_j + 3 \rho X_j^2 y_j + 3 \rho X_j y_j^2 + \rho y_j^3 -
\epsilon (y_{j+1} + y_{j-1}) + {\rm Res}_j = 0,\quad
j\in\mathbb{Z}, \label{KG-pert}
\end{equation}
where the residual term ${\bf Res}(t)$ is given by
(\ref{residual-next}) if ${\bf a}(t)$ satisfies the dNLS equation
(\ref{NLSlattice}).  Associated with the evolution equation
(\ref{KG-pert}), we also define the energy of the error term
\begin{equation}
\label{energy}
E(t) := \frac{1}{2} \sum_{j \in \mathbb{Z}} \left[ \dot{y}_j^2 + y_j^2
  + 3 \rho X_j^2 y_j^2 - 2\epsilon y_j y_{j+1} \right].
\end{equation}
For every $\epsilon \in \left(0,\frac{1}{4}\right)$, the energy $E(t)$
is coercive and controls the $\ell^2(\mathbb{Z})$ norm of the solution in the sense
\begin{equation}
\label{coercivity}
\| \dot{\bf y}(t) \|_{\ell^2}^2 + \| {\bf y}(t) \|^2_{\ell^2} \leq 4 E(t),
\end{equation}
for every $t$, for which the solution ${\bf y}(t)$ is defined.
The rate of change for the energy (\ref{energy}) is found from the
evolution problem (\ref{KG-pert}):
\begin{equation}
\frac{dE}{dt} = \sum_{j \in \mathbb{Z}} \left[ - \dot{y}_j {\rm Res}_j +
  3 \rho X_j \dot{X}_j y_j^2 - 3 \rho X_j y_j^2 \dot{y}_j - \rho y_j^3
  \dot{y}_j \right]. \label{energy-rate}
\end{equation}

Thanks to the coercivity (\ref{coercivity}), the Cauchy--Schwarz inequality, and the continuous
embedding of $\ell^2(\mathbb{Z})$ to $\ell^{\infty}(\mathbb{Z})$, we obtain
\begin{equation}
\left| \frac{dE}{dt} \right| \leq 2 E^{1/2} \left[ \| {\bf Res}(t)
  \|_{\ell^2} + 6 \rho E^{1/2} \| {\bf X}(t) \|_{\ell^2} \| \dot{\bf X}(t)
  \|_{\ell^2} + 12 \rho E \| {\bf X}(t) \|_{\ell^2} + 8 \rho E^{3/2}\right]. \label{energy-estimate}
\end{equation}
To simplify analysis, it is better to introduce the parametrization $E = Q^2$
and rewrite (\ref{energy-estimate}) in the equivalent form
\begin{equation}
\left| \frac{dQ}{dt} \right| \leq \| {\bf Res}(t) \|_{\ell^2} + 6 \rho
Q \| {\bf X}(t) \|_{\ell^2} \| \dot{\bf X}(t) \|_{\ell^2} + 12 \rho Q^2
\| {\bf X}(t) \|_{\ell^2} + 8 \rho Q^3. \label{Q-estimate}
\end{equation}
The energy estimate (\ref{Q-estimate}) is the starting point for the proof of Theorem
\ref{theorem-justification}.\\

\begin{proof1}{\bf of Theorem \ref{theorem-justification}.}
Let $\tau_0 > 0$ be fixed arbitrarily but independently of $\epsilon$
and assume that the initial norm of the perturbation term satisfies
the following bound
\begin{equation}
\label{initial-bound}
Q(0) \leq C_0 \rho^{-1} \epsilon^2,
\end{equation}
where $C_0$ is a positive $\epsilon$-independent constant and
$\epsilon \in \left( 0, \frac{1}{4} \right)$ is sufficiently small.  Note that the bound
(\ref{initial-bound}) follows from the assumption
(\ref{bound-initial}) and the energy (\ref{energy}) subject to the choice of constant $C_0$.

To justify the dNLS equation (\ref{NLSlattice}) on the time scale
$[-\tau_0 \rho^{-1},\tau_0 \rho^{-1}]$ for $t$, we define
\begin{equation}
T_0 := \sup\left\{ t_0 \in [0,\tau_0 \rho^{-1}] : \quad \sup_{t \in [-t_0,t_0]} Q(t) \leq C_Q \rho^{-1}
\epsilon^2 \right\}, \label{time-span}
\end{equation}
where $C_Q > C_0$ is a positive $\epsilon$-independent constant to be
determined below. By continuity of the solution in the
$\ell^2(\mathbb{Z})$ norm, it is clear that $T_0 > 0$.

By using Lemmas \ref{lemma-leading-approximation} and \ref{lemma-residual},
as well as the definition (\ref{time-span}), we write the energy estimate
(\ref{Q-estimate}) for every $t \in [-T_0,T_0]$ in the form
\begin{equation}
\left| \frac{dQ}{dt} \right| \leq C_R \epsilon^2 +
\rho \left( 6 C_X^2 + 12 C_X C_Q \rho^{-1} \epsilon^2 + 8 C_Q^2 \rho^{-2} \epsilon^4 \right) Q.
\label{Q-estimate-explicit}
\end{equation}
If $\epsilon > 0$ is sufficiently small and $\epsilon^2 \ll \rho$, for every $t \in
[-T_0,T_0]$, one can always find a positive
$\epsilon$-independent $k_0$ such that
\begin{equation}
6 C_X^2 + 12 C_X C_Q \rho^{-1} \epsilon^2 + 8 C_Q^2 \rho^{-2} \epsilon^4 \leq k_0.
\label{C-Q-bound}
\end{equation}
Integrating (\ref{Q-estimate-explicit}), we obtain
\begin{equation}
Q(t) e^{- \rho k_0 |t|} - Q(0) \leq \int_0^{|t|} C_R \epsilon^2
e^{-\rho k_0 t'} d t' \leq \frac{C_R \epsilon^2}{\rho k_0}. \label{Q-integral-estimate}
\end{equation}
By using (\ref{initial-bound}), we obtain for every $t \in [-T_0,T_0]$:
\begin{equation}
Q(t) \leq \rho^{-1} \epsilon^2 \left( C_0 + k_0^{-1} C_R \right) e^{k_0 \tau_0}.
\label{final-bound}
\end{equation}
Hence, we can define $C_Q := \left( C_0 + k_0^{-1} C_R \right) e^{k_0
  \tau_0}$ and extend the time interval in (\ref{time-span}) by
elementary continuation arguments to the full time span with
$T_0 = \tau_0 \rho^{-1}$. This completes justification of the
dNLS equation (\ref{NLSlattice}) in Theorem
\ref{theorem-justification}.
\end{proof1}

\subsection{Justification of the dNLS equation on the extended time scale}

Next, we justify the dNLS equation (\ref{NLSlattice}) on the extended
time scale
\begin{equation}
\label{time-interval-mod}
[-A |\log(\rho)| \rho^{-1},A |\log(\rho)| \rho^{-1}],
\end{equation}
for the variable $t$, where the positive constant $A$ is
fixed independently of $\epsilon$. The main
result of this section is the following justification theorem.

\begin{theorem}
Assume that there is $\alpha \in (0,1)$ such that
$\rho$ is defined in the asymptotic range
$$
\epsilon^{\frac{2}{1+\alpha}} \ll \rho \leq \epsilon.
$$
For every $A \in \left(0, k_0^{-1} \alpha \right)$,
where $k_0$ is defined in (\ref{C-Q-bound-mod}) below, there
is a small $\epsilon_0 > 0$ and positive constants $C_0$ and $C$ such
that for every $\epsilon \in (0,\epsilon_0)$, for which the initial
data satisfies
\begin{equation}
\label{bound-initial-extended}
\| \textbf{$\xi$}(0) - {\bf X}(0) \|_{l^2} + \| \dot{\textbf{$\xi$}}(0) - \dot{\bf
  X}(0) \|_{l^2} \leq C_0 \rho^{-1} \epsilon^2,
\end{equation}
the solution of the dKG equation (\ref{KG})
satisfies for every $t$ in the time span (\ref{time-interval-mod}),
\begin{equation}
\label{bound-final-extended}
\| \textbf{$\xi$}(t) - {\bf X}(t) \|_{l^2} + \| \dot{\textbf{$\xi$}}(t) - \dot{\bf
  X}(t) \|_{l^2} \leq C \rho^{-1-\alpha} \epsilon^{2}.
\end{equation}
\label{theorem-justification-extended}
\end{theorem}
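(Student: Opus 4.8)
The plan is to follow the same continuation (bootstrap) scheme as in the proof of Theorem~\ref{theorem-justification}, the only structural change being that over the longer time span (\ref{time-interval-mod}) the error is permitted to reach the larger size $\rho^{-1-\alpha}\epsilon^2$. As before I would write $\xi(t)={\bf X}(t)+{\bf y}(t)$, work with the energy (\ref{energy}) and its coercivity (\ref{coercivity}), set $E=Q^2$, and take the differential inequality (\ref{Q-estimate}) as the starting point. The whole argument then reduces to tracking how large the Gronwall factor becomes at the end of the interval (\ref{time-interval-mod}) and to checking that the resulting bound still closes the bootstrap.

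Concretely, I would fix an $\epsilon$-independent constant $C_Q>C_0$ and set
\[
T_0:=\sup\left\{t_0\in[0,A|\log(\rho)|\rho^{-1}]:\ \sup_{t\in[-t_0,t_0]}Q(t)\le C_Q\rho^{-1-\alpha}\epsilon^2\right\},
\]
which is positive by continuity of the solution. On $[-T_0,T_0]$ the bootstrap bound $Q\le C_Q\rho^{-1-\alpha}\epsilon^2$, together with Lemmas~\ref{lemma-leading-approximation} and~\ref{lemma-residual}, turns (\ref{Q-estimate}) into
\[
\left|\frac{dQ}{dt}\right|\le C_R\epsilon^2+\rho\left(6C_X^2+12C_XC_Q\rho^{-1-\alpha}\epsilon^2+8C_Q^2\rho^{-2-2\alpha}\epsilon^4\right)Q.
\]
This is exactly the point at which the asymptotic range enters: the hypothesis $\epsilon^{2/(1+\alpha)}\ll\rho$ is precisely the statement that $\rho^{-1-\alpha}\epsilon^2\to0$, so for $\epsilon$ small the bracketed coefficient is bounded by an $\epsilon$-independent constant $k_0$,
\begin{equation}
6C_X^2+12C_XC_Q\rho^{-1-\alpha}\epsilon^2+8C_Q^2\rho^{-2-2\alpha}\epsilon^4\le k_0,
\label{C-Q-bound-mod}
\end{equation}
and on $[-T_0,T_0]$ we are left with $|dQ/dt|\le C_R\epsilon^2+k_0\rho Q$. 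Integrating this linear inequality exactly as in (\ref{Q-integral-estimate}) gives $Q(t)\le e^{k_0\rho|t|}\bigl(Q(0)+k_0^{-1}C_R\rho^{-1}\epsilon^2\bigr)$ for $t\in[-T_0,T_0]$. The new ingredient is the evaluation of the exponential on the long time scale: since $|t|\le A|\log(\rho)|\rho^{-1}$ we have $e^{k_0\rho|t|}\le e^{k_0A|\log(\rho)|}=\rho^{-k_0A}$, and with $Q(0)\le C_0\rho^{-1}\epsilon^2$ this yields
\[
Q(t)\le\left(C_0+k_0^{-1}C_R\right)\rho^{-1-k_0A}\epsilon^2,\qquad t\in[-T_0,T_0].
\]

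The closure of the bootstrap is where the restriction $A<k_0^{-1}\alpha$ does its work, and I expect this to be the only delicate step. Because $k_0A<\alpha$ one has $\rho^{-1-k_0A}=\rho^{\alpha-k_0A}\,\rho^{-1-\alpha}$ with $\rho^{\alpha-k_0A}\to0$ as $\rho\to0$, so for $\epsilon$ below a suitable threshold $\epsilon_0$ the bound just obtained is strictly smaller than $C_Q\rho^{-1-\alpha}\epsilon^2$. This strict gap rules out $T_0<A|\log(\rho)|\rho^{-1}$ (otherwise the defining inequality would persist slightly beyond $T_0$, contradicting the supremum), so $T_0=A|\log(\rho)|\rho^{-1}$ and the estimate holds on the full interval. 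Translating from $Q$ back to the $\ell^2$ norms of ${\bf y}$ and $\dot{\bf y}$ through the coercivity (\ref{coercivity}) then gives (\ref{bound-final-extended}) with $C=C_0+k_0^{-1}C_R$. The single recurring subtlety to watch is keeping the margin $\alpha-k_0A>0$ strict, since it is this same margin that both bounds the nonlinear coefficient in (\ref{C-Q-bound-mod}) and powers the continuation.
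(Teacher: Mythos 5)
Your proposal is correct and follows essentially the same route as the paper's own proof: the identical bootstrap definition of the exit time, the same Gronwall integration of (\ref{Q-estimate}) under the coefficient bound (\ref{C-Q-bound-mod}) guaranteed by $\epsilon^2 \ll \rho^{1+\alpha}$, and the same evaluation $e^{k_0\rho|t|} \leq \rho^{-k_0 A} \leq \rho^{-\alpha}$ using $k_0 A < \alpha$ to close the continuation argument. The only (harmless) differences are bookkeeping: you make the strict margin $\alpha - k_0 A > 0$ and the resulting gap $\rho^{\alpha-k_0A}\to 0$ explicit where the paper leaves the continuation step implicit, and the final constant $C$ should absorb the coercivity factor from (\ref{coercivity}) rather than equal $C_0 + k_0^{-1}C_R$ exactly.
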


\begin{remark}
If $\rho = \epsilon$, the extended time scale (\ref{time-interval-mod})
corresponds to the interval $[-A |\log(\epsilon)|,A |\log(\epsilon)|]$
for the variable $\tau = \epsilon t$ in the dNLS equation
(\ref{NLSlattice}), hence it extends to all times $\tau$ as
$\epsilon \to 0$.
\end{remark}

\begin{remark}
If $\rho = \epsilon^{8/5}$, then the error term in (\ref{bound-final-extended}) satisfies the
$\mathcal{O}_{\ell^2}(\epsilon^{2 (1-4\alpha) /5})$ bound, which is small if $\alpha \in
\left(0,\frac{1}{4}\right)$. The error term is controlled on the
longer time scale
$$
\left[-\tau_0 |\log(\epsilon)| \epsilon^{-3/5},\tau_0 |\log(\epsilon)| \epsilon^{-3/5}\right]
$$
for the variable $\tau = \epsilon t$ of the dNLS equation (\ref{NLSlattice}) with $\nu = \epsilon^{3/5}$.
\end{remark}

\begin{proof}
We use the same assumption (\ref{initial-bound}) on the initial norm
of the perturbation term.  To justify the dNLS equation
(\ref{NLSlattice}) on the time scale (\ref{time-interval-mod}) for
$t$, we define
\begin{equation}
  T_0^* := \sup\left\{ t_0 \in \left[0,A |\log(\rho)| \rho^{-1}\right] : \quad
  \sup_{t \in [-t_0,t_0]} Q(t) \leq C_Q \rho^{-1-\alpha} \epsilon^2 \right\},
  \label{time-span-mod}
\end{equation}
where $C_Q$ is a positive $\epsilon$-independent constant to be
determined below.

By using Lemmas \ref{lemma-leading-approximation} and \ref{lemma-residual},
as well as the definition (\ref{time-span-mod}), we write the energy estimate
(\ref{Q-estimate}) for every $t \in [-T_0^*,T_0^*]$ in the form
\begin{equation}
\left| \frac{dQ}{dt} \right| \leq C_R \epsilon^2 +
\rho \left( 6 C_X^2 + 12 C_X C_Q \rho^{-1-\alpha} \epsilon^2 + 8 C_Q^2 \rho^{-2(1+\alpha)} \epsilon^4 \right) Q.
\label{Q-estimate-mod}
\end{equation}
If $\epsilon > 0$ is sufficiently small and $\epsilon^2 \ll \rho^{1+\alpha}$, then for every $t
\in [-T_0^*,T_0^*]$, one can always find a positive
$\epsilon$-independent $k_0$ such that
\begin{equation}
6 C_X^2 + 12 C_X C_Q \rho^{-1-\alpha} \epsilon^2 + 8 C_Q^2 \rho^{-2(1+\alpha)} \epsilon^4  \leq k_0.
\label{C-Q-bound-mod}
\end{equation}
By integrating the energy estimate (\ref{Q-estimate-mod}) in the same way as is done in
(\ref{Q-integral-estimate}), we obtain
for every $t \in [-T_0^*,T_0^*]$:
\begin{eqnarray}
\nonumber
Q(t) & \leq & \rho^{-1} \epsilon^2  \left( C_0 + k_0^{-1} C_R \right) e^{k_0 A |\log(\rho)|}\\
& \leq & \rho^{-1-\alpha} \epsilon^2  \left( C_0 + k_0^{-1} C_R \right),
\label{final-bound-mod}
\end{eqnarray}
where the last bound holds because $k_0 A \in (0,\alpha)$.
Hence, we can define $C_Q := C_0 + k_0^{-1} C_R$ and extend the time
interval in (\ref{time-span-mod}) by elementary continuation arguments
to the full time span with $T_0^* = A |\log(\rho)|
\rho^{-1}$. This completes justification of the dNLS equation
(\ref{NLSlattice}) on the time scale (\ref{time-interval-mod}).
\end{proof}

\subsection{Approximations with the generalized dNLS equation}

Extensions of the justification analysis are definitely possible by
including more $\epsilon$-dependent terms into the dNLS equation
(\ref{NLSlattice}) and the leading-order approximation
(\ref{leading-order}), which makes the residual term
(\ref{residual-next}) to be as small as $\mathcal{O}(\epsilon^n)$ for
any $n \geq 2$. These extensions are not so important if $\epsilon^2
\ll \rho \leq \epsilon$ but they become crucial to capture the correct
balance between linear and nonlinear effects on the dynamics of
small-amplitude oscillators if $\rho \leq \epsilon^2$.

To illustrate these extensions, we show how to modify the
justification analysis in the asymptotic range $\epsilon^3 \ll \rho
\leq \epsilon^2$.  We use the same leading-order approximation
(\ref{leading-order}) in the form
\begin{equation}
X_j(t) = a_{j}(\epsilon t)e^{it} +
\bar{a}_{j}(\epsilon t) e^{-it} + \frac{1}{8} \rho  \left( a_j^3(\epsilon t) e^{3it} +
\bar{a}_j^3(\epsilon t) e^{-3 it} \right),
\label{leading-order-same}
\end{equation}
but assume that ${\bf a}(\tau)$ with $\tau = \epsilon t$ satisfy the generalized dNLS equation
\begin{equation}
2 i \dot{a}_j + 3 \epsilon \delta |a_j|^2 a_j = a_{j+1} + a_{j-1}
+ \frac{\epsilon}{4} \left( a_{j+2} + 2 a_j + a_{j-2}\right),\quad j\in\mathbb{Z}.
\label{NLSlattice-extended}
\end{equation}
Here we have introduced the parameter $\delta = \rho/\epsilon^2$ in the
asymptotic range $\epsilon \ll \delta \leq 1$.  Substituting
(\ref{leading-order-same}) and (\ref{NLSlattice-extended}) into the
dKG equation (\ref{KG}), we obtain modifications of the residual
terms (\ref{residual-next}) in the form
\begin{eqnarray}
\nonumber {\rm Res}_j(t) & := & \frac{1}{4} \epsilon^2 \left( 4
\ddot{a}_j + a_{j+2} + 2a_j + a_{j-2} \right) e^{i t} + \frac{1}{4} \epsilon^2
\left( \ddot{\bar{a}}_j + \bar{a}_{j+2} + 2 \bar{a}_j + \bar{a}_{j-2}  \right) e^{-it}
\\ \nonumber & \phantom{t} & - \frac{1}{8} \epsilon \rho \left(
(a_{j+1}^3 + a_{j-1}^3)e^{3 i t} +
(\bar{a}_{j+1}^3 + \bar{a}_{j-1}^3)e^{-3 i t} \right)
\\ \nonumber & \phantom{t} & + \frac{3}{8} \rho^2 \left( a_j e^{i t} +
\bar{a}_j e^{-it} \right)^2 \left( a_j^3 e^{3 it} + \bar{a}_j^3 e^{-3
  it} \right) + \frac{9}{4} \epsilon \rho \left( i a_j^2 \dot{a}_j e^{3
  it} - i \bar{a}_j^2 \dot{\bar{a}}_j e^{-3 it} \right) \\ \nonumber &
\phantom{t} & + \frac{3}{64} \rho^3\left( a_j e^{i t} + \bar{a}_j
e^{-it} \right)\left( a_j^3 e^{3 it} + \bar{a}_j^3 e^{-3 it} \right)^2
+ \frac{1}{8} \epsilon^2 \rho \left( \ddot{a^3_j} e^{3 it} +
\ddot{\overline{a}^3_j} e^{-3 it} \right)\\ & \phantom{t} & + \frac{1}{512}
\rho^4 \left( a_j^3 e^{3 it} + \bar{a}_j^3 e^{-3 it} \right)^3.
\label{residual-next-extended}
\end{eqnarray}
By using the extended dNLS equation
(\ref{NLSlattice-extended}), we realize that the residual terms of the
$\mathcal{O}_{\ell^2}(\epsilon^2)$ order are canceled and the
residual term in (\ref{residual-next-extended}) enjoys the improved estimate
\begin{equation}
\| {\bf Res}(t) \|_{\ell^2} \leq C_R(\| {\bf a}_0 \|_{\ell^2}) \epsilon^3,
\label{residual-estimate-extended}
\end{equation}
compared with the previous estimate (\ref{residual-estimate}). As a result, the
justification analysis developed in the proof of Theorems \ref{theorem-justification}
and \ref{theorem-justification-extended} holds verbatim and results in the following theorems.

\begin{theorem}
Assume that $\rho$ is defined in the asymptotic range $\epsilon^3 \ll
\rho \leq \epsilon^2$.  For every $\tau_0 > 0$, there is a small
$\epsilon_0 > 0$ and positive constants $C_0$ and $C$ such that for
every $\epsilon \in (0,\epsilon_0)$, for which the initial data
satisfies
\begin{equation}
\label{bound-initial-generalized}
\| \textbf{$\xi$}(0) - {\bf X}(0) \|_{l^2} + \| \dot{\textbf{$\xi$}}(0) - \dot{\bf
  X}(0) \|_{l^2} \leq C_0 \rho^{-1} \epsilon^3,
\end{equation}
the solution of the dKG equation (\ref{KG})
satisfies for every $t \in [-\tau_0 \rho^{-1},\tau_0 \rho^{-1}]$,
\begin{equation}
\label{bound-final-generalized}
\| \textbf{$\xi$}(t) - {\bf X}(t) \|_{l^2} + \| \dot{\textbf{$\xi$}}(t) - \dot{\bf
  X}(t) \|_{l^2} \leq C \rho^{-1} \epsilon^3.
\end{equation}
\label{theorem-justification-generalized}
\end{theorem}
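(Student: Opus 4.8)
The plan is to run the bootstrap of Theorem \ref{theorem-justification} essentially unchanged, the only substantive difference being that the residual is now of order $\epsilon^3$ rather than $\epsilon^2$, so that every $\epsilon^2$ produced by the residual becomes an $\epsilon^3$ and the smallness hypothesis $\epsilon^2 \ll \rho$ is replaced by $\epsilon^3 \ll \rho$. Before invoking the earlier machinery I would first note that the analogs of Lemmas \ref{lemma-existence} and \ref{lemma-leading-approximation} remain valid for the extended dNLS equation (\ref{NLSlattice-extended}): its linear part is a bounded self-adjoint operator on $\ell^2(\mathbb{Z})$ and its nonlinearity is gauge invariant, so the $\ell^2(\mathbb{Z})$ norm of ${\bf a}(\tau)$ is still conserved and the solution is global and smooth in $\tau$. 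Since the leading-order approximation (\ref{leading-order-same}) has the same algebraic form as (\ref{leading-order}), this gives a constant $C_X(\|{\bf a}_0\|_{\ell^2})$ with $\|{\bf X}(t)\|_{\ell^2} + \|\dot{\bf X}(t)\|_{\ell^2} \leq C_X$ for all $t$.

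Next I would write the solution of (\ref{KG}) as $\textbf{$\xi$}(t) = {\bf X}(t) + {\bf y}(t)$ and substitute into the lattice equation. Because $X_j$ has the same quadratic and cubic structure as before, the error ${\bf y}$ obeys an equation of exactly the form (\ref{KG-pert}), with the residual now given by (\ref{residual-next-extended}) in place of (\ref{residual-next}). I would then define the energy $E(t)$ and the variable $Q = E^{1/2}$ precisely as in (\ref{energy}); the coercivity (\ref{coercivity}), the energy identity (\ref{energy-rate}), and the differential inequality (\ref{Q-estimate}) all follow verbatim, since each of them sees the leading-order profile only through the bounds on $\|{\bf X}\|_{\ell^2}$ and $\|\dot{\bf X}\|_{\ell^2}$ and is insensitive to which dNLS equation ${\bf a}$ actually solves. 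The single new analytic input is the improved residual estimate (\ref{residual-estimate-extended}), $\|{\bf Res}(t)\|_{\ell^2} \leq C_R \epsilon^3$, which replaces the $\epsilon^2$ bound of Lemma \ref{lemma-residual}.

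With these pieces in place I would repeat the continuation argument with the modified target $Q(t) \leq C_Q \rho^{-1}\epsilon^3$, defining $T_0$ as in (\ref{time-span}) with $\epsilon^2$ replaced by $\epsilon^3$. Feeding $\|{\bf Res}\|_{\ell^2} \leq C_R\epsilon^3$, $\|{\bf X}\|_{\ell^2}\|\dot{\bf X}\|_{\ell^2} \leq C_X^2$, and $Q \leq C_Q\rho^{-1}\epsilon^3$ into (\ref{Q-estimate}) produces the analog of (\ref{Q-estimate-explicit}),
\begin{equation}
\left|\frac{dQ}{dt}\right| \leq C_R\epsilon^3 + \rho\left(6C_X^2 + 12 C_X C_Q \rho^{-1}\epsilon^3 + 8 C_Q^2 \rho^{-2}\epsilon^6\right) Q .
\end{equation}
The crucial point is that the two nonlinear corrections are $12 C_X C_Q (\rho^{-1}\epsilon^3)$ and $8 C_Q^2 (\rho^{-1}\epsilon^3)^2$, and both tend to zero as $\epsilon \to 0$ exactly under the standing hypothesis $\epsilon^3 \ll \rho \leq \epsilon^2$; hence the bracket admits an $\epsilon$-independent bound $k_0$ as in (\ref{C-Q-bound}). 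Integrating as in (\ref{Q-integral-estimate}) and using $Q(0) \leq C_0 \rho^{-1}\epsilon^3$ yields $Q(t) \leq \rho^{-1}\epsilon^3 (C_0 + k_0^{-1}C_R) e^{k_0\tau_0}$ on $[-T_0,T_0]$; setting $C_Q := (C_0 + k_0^{-1}C_R)e^{k_0\tau_0}$ and applying the usual continuation argument then extends the estimate to $T_0 = \tau_0\rho^{-1}$, and converting back through (\ref{coercivity}) gives (\ref{bound-final-generalized}).

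The only step that demands genuine verification rather than transcription is the improved residual bound (\ref{residual-estimate-extended}). I expect this to be the main obstacle: one must check that the resonant $\mathcal{O}(\epsilon^2)$ terms carried by $e^{\pm it}$ in (\ref{residual-next-extended}) are precisely the ones removed by the extra linear hopping terms built into the extended dNLS equation (\ref{NLSlattice-extended}), so that the surviving remainder is honestly $\mathcal{O}_{\ell^2}(\epsilon^3)$ after one invokes the Banach algebra property of $\ell^2(\mathbb{Z})$ and the conservation of $\|{\bf a}(\tau)\|_{\ell^2}$. Once this cancellation is confirmed term by term, the rest of the argument is a line-by-line replica of the proof of Theorem \ref{theorem-justification} under the replacements $\epsilon^2 \mapsto \epsilon^3$ and $\epsilon^2 \ll \rho \mapsto \epsilon^3 \ll \rho$.
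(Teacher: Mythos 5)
Your proposal is correct and follows essentially the same route as the paper, which simply asserts that once the extended dNLS equation \eqref{NLSlattice-extended} cancels the resonant $\mathcal{O}(\epsilon^2)$ terms in \eqref{residual-next-extended} to give the improved bound \eqref{residual-estimate-extended}, the proof of Theorem \ref{theorem-justification} ``holds verbatim'' with $\epsilon^2$ replaced by $\epsilon^3$ and $\epsilon^2 \ll \rho$ by $\epsilon^3 \ll \rho$. You correctly isolated the one step requiring genuine verification --- that $4\ddot{a}_j + a_{j+2} + 2a_j + a_{j-2} = \mathcal{O}_{\ell^2}(\epsilon)$ when $\ddot{a}_j$ is eliminated via \eqref{NLSlattice-extended} --- which the paper states without detail.
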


\begin{theorem}
Assume that there is $\alpha \in \left( 0, \frac{1}{2}\right)$ such that
$\rho$ is defined in the asymptotic range
$$
\epsilon^{\frac{3}{1+\alpha}} \ll \rho \leq \epsilon^2.
$$
There is $A_0 > 0$ such that for every $A \in \left(0, A_0 \right)$, there
is a small $\epsilon_0 > 0$ and positive constants $C_0$ and $C$ such
that for every $\epsilon \in (0,\epsilon_0)$, for which the initial
data satisfies
\begin{equation}
\label{bound-initial-extended-generalized}
\| \textbf{$\xi$}(0) - {\bf X}(0) \|_{l^2} + \| \dot{\textbf{$\xi$}}(0) - \dot{\bf
  X}(0) \|_{l^2} \leq C_0 \rho^{-1} \epsilon^3,
\end{equation}
the solution of the dKG equation (\ref{KG})
satisfies for every $t$ in the time span (\ref{time-interval-mod}),
\begin{equation}
\label{bound-final-extended-generalized}
\| \textbf{$\xi$}(t) - {\bf X}(t) \|_{l^2} + \| \dot{\textbf{$\xi$}}(t) - \dot{\bf
  X}(t) \|_{l^2} \leq C \rho^{-1-\alpha} \epsilon^{3}.
\end{equation}
\label{theorem-justification-extended-generalized}
\end{theorem}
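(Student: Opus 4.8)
The plan is to rerun the energy--continuation argument of Theorem~\ref{theorem-justification-extended} essentially verbatim, the only structural upgrade being that the residual now satisfies the improved bound~(\ref{residual-estimate-extended}), $\|{\bf Res}(t)\|_{\ell^2}\le C_R\epsilon^3$, furnished by the generalized dNLS equation~(\ref{NLSlattice-extended}). First I would record the preliminary estimates in the new setting. The generalized dNLS equation~(\ref{NLSlattice-extended}) still has a bounded linear part (a finite combination of shifts) and a Banach-algebra cubic nonlinearity on $\ell^2(\mathbb{Z})$, so the arguments of Lemma~\ref{lemma-existence} and Lemma~\ref{lemma-leading-approximation} carry over: there is a global smooth solution ${\bf a}(\tau)$ with conserved $\ell^2$ norm, and the leading-order approximation~(\ref{leading-order-same}) obeys $\|{\bf X}(t)\|_{\ell^2}+\|\dot{\bf X}(t)\|_{\ell^2}\le C_X(\|{\bf a}_0\|_{\ell^2})$. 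The only genuinely new input is the residual estimate~(\ref{residual-estimate-extended}), which replaces Lemma~\ref{lemma-residual}.

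Next I would set up the error $\mathbf{y}=\xi-\mathbf{X}$. Because~(\ref{leading-order-same}) has exactly the same algebraic form as~(\ref{leading-order}), substituting $\xi=\mathbf{X}+\mathbf{y}$ into the dKG equation~(\ref{KG}) reproduces the perturbation equation~(\ref{KG-pert}) unchanged, with ${\bf Res}_j$ now given by~(\ref{residual-next-extended}). Consequently the coercive energy~(\ref{energy}), its rate of change~(\ref{energy-rate}), and the differential inequality~(\ref{Q-estimate}) for $Q=E^{1/2}$ are identical in form; only the residual input is upgraded to $\|{\bf Res}(t)\|_{\ell^2}\le C_R\epsilon^3$. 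The initial hypothesis~(\ref{bound-initial-extended-generalized}) translates, through the energy~(\ref{energy}) and its coercivity, into $Q(0)\le C_0\rho^{-1}\epsilon^3$ with an adjusted constant $C_0$.

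Then I would run the continuation argument. Mirroring~(\ref{time-span-mod}), define $T_0^*$ as the supremum of $t_0\in[0,A|\log\rho|\rho^{-1}]$ for which $\sup_{[-t_0,t_0]}Q\le C_Q\rho^{-1-\alpha}\epsilon^3$, with $C_Q$ to be fixed. Feeding the bounds on $C_X$, $C_R$ and the definition of $T_0^*$ into~(\ref{Q-estimate}) gives the analogue of~(\ref{Q-estimate-mod}),
\[
\left|\frac{dQ}{dt}\right|\le C_R\epsilon^3+\rho\bigl(6C_X^2+12C_XC_Q\rho^{-1-\alpha}\epsilon^3+8C_Q^2\rho^{-2(1+\alpha)}\epsilon^6\bigr)Q .
\]
Under $\epsilon^{3/(1+\alpha)}\ll\rho$, equivalently $\epsilon^3\ll\rho^{1+\alpha}$, both $C_Q$-dependent terms vanish as $\epsilon\to0$, so the bracket is bounded by an $\epsilon$-independent $k_0$ governed by $6C_X^2$, exactly as in~(\ref{C-Q-bound-mod}). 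Integrating as in~(\ref{Q-integral-estimate}) over the span~(\ref{time-interval-mod}) and using $e^{k_0A|\log\rho|}=\rho^{-k_0A}$ yields, for $t\in[-T_0^*,T_0^*]$,
\[
Q(t)\le\rho^{-1}\epsilon^3(C_0+k_0^{-1}C_R)\,e^{k_0A|\log\rho|}=\rho^{-1-k_0A}\epsilon^3(C_0+k_0^{-1}C_R).
\]
Setting $A_0:=\alpha/k_0$, for every $A\in(0,A_0)$ one has $k_0A<\alpha$, so the right-hand side is $\le(C_0+k_0^{-1}C_R)\rho^{-1-\alpha}\epsilon^3$; choosing $C_Q:=C_0+k_0^{-1}C_R$ closes the bootstrap and a standard continuation extends $T_0^*$ to the full span, giving~(\ref{bound-final-extended-generalized}). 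The restriction $\alpha\in(0,\tfrac12)$ enters precisely to keep the asymptotic range nonempty, since $\epsilon^{3/(1+\alpha)}\ll\epsilon^2$ requires $3/(1+\alpha)>2$.

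I expect the main obstacle to be the self-consistent choice of the pair $(k_0,C_Q)$: the bracket bound~(\ref{C-Q-bound-mod}) depends on $C_Q$, while $C_Q=C_0+k_0^{-1}C_R$ depends on $k_0$, so the two must be fixed compatibly. This is resolved exactly because the lower bound $\epsilon^3\ll\rho^{1+\alpha}$ drives the $C_Q$-dependent terms to zero, leaving $k_0$ pinned by the surviving $6C_X^2$ and hence $\epsilon$-independent; one then fixes $k_0$ slightly above $6C_X^2$, defines $C_Q$, and verifies for small $\epsilon$ that~(\ref{C-Q-bound-mod}) indeed holds. The remaining quantitative crux is the balance between the permitted time and the tolerated error: the Gr\"onwall growth $\rho^{-k_0A}$ must be absorbed into the allotted loss $\rho^{-\alpha}$, which is exactly what the constraint $A<A_0=\alpha/k_0$ secures.
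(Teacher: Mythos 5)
Your proposal is correct and follows essentially the same route as the paper, whose proof consists precisely of the observation that, with the improved residual bound (\ref{residual-estimate-extended}) supplied by the generalized dNLS equation (\ref{NLSlattice-extended}), the energy--continuation argument of Theorems \ref{theorem-justification} and \ref{theorem-justification-extended} holds verbatim with $\epsilon^2$ replaced by $\epsilon^3$. You have merely spelled out that rerun explicitly, including the correct choice $A_0=\alpha/k_0$, the consistent fixing of the pair $(k_0,C_Q)$ via $\epsilon^3\ll\rho^{1+\alpha}$, and the reason $\alpha<\tfrac12$ keeps the asymptotic range nonempty.
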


We note that ${\bf X}$ in Theorems \ref{theorem-justification-generalized} and \ref{theorem-justification-extended-generalized}
is defined by the leading-order approximation (\ref{leading-order-same}), whereas
${\bf a}$ satisfies the generalized dNLS equation (\ref{NLSlattice-extended}).
The time scales in Theorems \ref{theorem-justification-generalized} and \ref{theorem-justification-extended-generalized}
are appropriate for the generalized dNLS equation (\ref{NLSlattice-extended})
because $\delta \leq 1$ and $\epsilon \rho^{-1} \geq \epsilon^{-1}$.

\section{Justification of the dNLS equation with the normal form method}
\label{s:NF}

We now consider the dKG equation \eqref{KGlattice} on a finite chain of $2N+1$
oscillators under periodic boundary conditions. The finite dKG chain is
associated with the Hamiltonian $H = H_0 + H_1$, where
\begin{equation}
  \label{e.H}
H_0:=\frac12\sum_{j=-N}^{N}
  \quadr{y^2_j + x^2_j - 2\eps x_{j+1}x_j}\ ,\qquad H_1:= \frac{1}4
  \sum_{j=-N}^N x_j^4,
\end{equation}
subject to the periodic boundary conditions $x_{-N}=x_{N+1}$ and
$y_{-N}=y_{N+1}$.  Since $N$ can be considered arbitrary large in the
perturbation approach which follows, the finite dKG chain approximates the
infinite problem in the asymptotic sense, as $N \to \infty$. Although the
present theory can be adapted to the infinite lattice, we prefer to rely on
some already proved results for the finite dKG chain for the sake of brevity.

According to the previous result in \cite{PP14}, for any small coupling
$\eps$, there exists a canonical transformation $T_\Chi$ which puts the
Hamiltonian $H = H_0 + H_1$, with $H_0$ and $H_1$ in \eqref{e.H}, into an
extensive resonant normal form of order $r$
\begin{equation}
\label{hamiltonian-transformed}
  \Ham{r}{} = H_\Omega + \Zscr + {P^{(r+1)}}\ , \qquad\qquad
  \{H_\Omega,\Zscr\}=0 \ ,
\end{equation}
where $H_\Omega$ is the Hamiltonian for the system of $2N+1$ identical
oscillators of frequency $\Omega$ (which is the average of the linear
frequencies \cite{GPP13}), $\Zscr$ is a non-homogeneous polynomial of
order $2r+2$, $P^{(r+1)}$ is a remainder of order $2r+4$ and
higher, and $r$ grows as an inverse power of $\eps$.  Such a normal
form was shown to be well defined in a small ball
$B_{\rho^{1/2}}(0)\subset\Pscr$ of the phase space $\Pscr$, endowed
with the Euclidean norm (which becomes the $\ell^2(\Z)$ norm in the
limit $N\to\infty$), provided $r\rho^{1/2}\ll 1$. The linear part of
the Hamiltonian $H_\Omega = \Omega\rho$ is equivalent to the selected
squared norm (uniformly with $N$), thus the almost invariance of
$H_\Omega$ over times $|t|\sim (r^2\rho)^{-r-1}$ is easily derived
since $\dot H_\Omega=\{H_\Omega,P^{(r+1)}\}$.

Looking at the structure of $\Zscr$, the normal form $H_\Omega+\Zscr$ produces
a generalized dNLS equation, where all
oscillators are coupled to all neighbors and the coupling coefficients both
for linear and nonlinear terms decay exponentially with the distance between
sites.  To be more specific, $\Zscr$ can be split as the sum of homogeneous
polynomials $Z_0, Z_1, ..., Z_r$, where $Z_0$ is quadratic, $Z_1$ is quartic,
and $Z_r$ is of the order $2r+2\geq 4$. Each of these
homogenous polynomials can be developed in powers of the coupling coefficient $\eps$,
where the term of order $\eps^m$ is responsible for the coupling between lattice sites
separated by the distance $m$.  The key ingredient to obtain the normal form is the
preservation of the translation invariance (called cyclic symmetry in
\cite{GPP13,PP14}), which also allows us to produce estimates that are uniform with $N$.

If we limit to $r=1$, the transformed Hamiltonian (\ref{hamiltonian-transformed}) reads
\begin{displaymath}
  \Ham{1}{} = \K + P^{(2)}\ ,\qquad \K := H_\Omega + Z_0 + Z_1\ ,
\end{displaymath}
where the quadratic and quartic polynomials $Z_0$ and $Z_1$ include
all-to-all interactions, exponentially decaying with $\eps$.  Hence,
$\K$ represents Hamiltonian of the generalized dNLS equation.
If we truncate both $Z_0$ and $Z_1$ at the leading order in $\eps$, we
recover Hamiltonian of the usual dNLS equation.

In Section 3.1, we introduce some definitions. The linear transformation is analyzed in
Section 3.2. The nonlinear normal form transformation is performed in Section 3.3.
Approximations with the usual dNLS equation are obtained in Section~\ref{ss:dNLS.gronw}.
Approximations with the generalized dNLS equation are discussed in Section 3.5.

\subsection{Some definitions}

We start recalling some definitions which allow us to characterize the structure
of the normal form (see also \cite{GPP12,GPP13,PP14,PP15}).

\paragraph{Cyclic symmetry:}
We formalize the {translational invariance} of the model \eqref{e.H} by using
the idea of \emph{cyclic symmetry}. The \emph{cyclic permutation} operator
$\tau$ is defined as
\begin{equation}
  \label{e.perm}
  \tau(x_{-N},\ldots,x_N) = (x_{-N+1},\ldots,x_N,x_{-N}).
\end{equation}
This operator can be applied separately to the variables $x$ and $y$. We
extend the action of this operator on the space of functions as $\bigl(\tau
f\bigr)(x,y) = f(\tau x,\tau y)$.

\begin{definition}
  \label{d.cs}
  We say that a function $F$ is \emph{cyclically symmetric} if $\tau F = F$.
\end{definition}

We introduce now an operator, indicated by an upper index $\oplus$,
acting on functions: given a function $f$, a new function $F=
f^{\oplus}$ is constructed as
\begin{equation}
  \label{e.cycl-fun}
  F= f^{\oplus} := \sum_{l=-N}^{N} \tau^l f \ .
\end{equation}
We say that $f^{\oplus}(x,y)$ is generated by the \emph{seed}
$f(x,y)$. Our convention is to denote the cyclically symmetric
functions by capital letters and their seeds by the corresponding
lower case letters.

\paragraph{Polynomial norms:}
Since we are interested in homogeneous polynomials (due to the use of Taylor
expansion), we introduce the following norms.

\begin{definition}
  \label{d.poly.norm}
  Let $f(x,y)=\sum_{|j|+|k|=s} f_{j,k} x^j y^k$ be a homogeneous
  polynomial of degree $s$ in $x,\,y$ and $F=f^\oplus$.  Given a
  positive radius $R$, we define the polynomial norm of $f$ by
  \begin{equation}
    \label{e.polinorm}
    \|f\|_R := R^s \sum_{|j|+|k|=s} |f_{j,k}|\ .
  \end{equation}
  Correspondingly, the extensive norm of $F$ is given by
  \begin{equation}
    \label{e.norm-germ}
    \bigl\|F\bigr\|^{\oplus}_R = \|f\|_R\ ,
  \end{equation}
\end{definition}

\paragraph{Vector fields:}
Let $F$ be an extensive Hamiltonian with seed $f$; we will make use of the
notation $X_F$ to indicate the associated Hamiltonian vector field $J\nabla
F$, with $J$ given by the standard Poisson structure. The Hamiltonian vector
field inherits, in a particular form, the cyclic symmetry: indeed it holds
true (see \cite{PP14,PP15})
\begin{equation}
  \label{e.seme.campo}
    \partial_{x_j}F = \tau^{j} \partial_{x_0}F, \quad
    \partial_{y_j}F = \tau^{j} \partial_{y_0}F, \quad
  j=-N,\ldots,N \ .
\end{equation}
As a result, a possible (but not unique) choice for its seed turns out to be
the couple $(\partial_{y_0}F, -\partial_{x_0}F)$. This fact allows us to
define in a reasonable and consistent way the following norm
\begin{equation}
  \label{e.def1}
  \ncamp{X_F}_R := \norm{\partial_{y_0}F}_R+\norm{\partial_{x_0}F}_R \ .
\end{equation}

\paragraph{Interaction range and centered alignment:}
Let us now consider monomials $x^j y^k$ in multi-index notations for $(j,k)$.

\begin{definition}
Given the exponents $(j,k)$, we define the support $S(x^jy^k)$ of the
monomials $x^j y^k$ and the interaction distance $\ell(x^jy^k)$ as follows:
\begin{equation}
  \label{e.supp}
  S(x^jy^k) = \{l\>:\>j_l\neq0 {\rm\ or\ } k_l\neq0 \}\ ,\quad
  \ell(x^jy^k) = \diam\bigl(S(x^jy^k)\bigr) \ .
\end{equation}
\end{definition}
We want to stress that, differently from what has been developed in
\cite{GPP12,GPP13}, it is possible to implement tha same construction by
asking the seeds of all the functions to be \emph{centered aligned}, according
to the following definition \cite{PP14}.

\begin{definition}
  Let $F=f^\oplus$ be a cyclically symmetric functions, with $f$ depending on
  $2N+1$ variables, $f=f(x_{-N},\ldots,x_0,\ldots,x_N)$. The seed $f$ is said
  centered aligned if it admits the decomposition
  \begin{equation}
    \label{e.decomp.cent}
    f=\sum_{m=0}^{N}f^{(m)}\ ,\qquad\qquad S(f^{(m)})\subseteq
    [-m,\ldots,m]\ .
  \end{equation}
\end{definition}

\paragraph{Exponential decay:}
In order to formalize and control the interaction range, we introduce
\begin{definition}
  \label{d.Cf_class}
  The seed $f$ of a function $F$ is said to be of class $\Dscr(C_f,\mu)$ if
  there exist two positive constants $C_f$ and $\mu<1$ such that for any
  centered aligned component $f^{(m)}$ it holds
  \begin{displaymath}
    \norm{f^{(m)}} \leq C_f \mu^m\ ,\qquad m=0,\ldots,N\ .
  \end{displaymath}
\end{definition}

\subsection{Linear transformation}

Let us focus on the harmonic part $H_0$ of the Hamiltonian $H$. From \eqref{e.H}, $H_0$ can be
written as the quadratic form
\begin{equation}
  \label{e.H0}
  H_0(x,y) = \frac12 y\cdot y + \frac12 Ax\cdot x\,
\end{equation}
where $A$ is a circulant and symmetric matrix given by
\begin{equation}
  \label{e.mu}
  A := \Id - \eps(\tau + \tau^{\top})\ .
\end{equation}
Here $\tau=(\tau_{ij})$ is the matrix representing the cyclic permutation
\eqref{e.perm}, i.e. with $\tau_{ij}=\delta_{i,j+1\>({\rm mod}\, 2N+1)}$ using
the Kronecker's delta notation.

\begin{proposition}
  \label{p.1}
  For every $\eps \in (0,\frac12)$ the canonical linear transformation $q=A^{1/4} x$,
  $p=A^{-1/4}y$ transforms the quadratic Hamiltonian $H_0$ to the quadratic normal form
  \begin{equation}
    \label{e.lindecomp}
    H^{(0)} = H_\Omega + Z_0\ ,\qquad\qquad  \Poi{H_\Omega}{Z_0}=0 \ ,
  \end{equation}
  where $H_\Omega = h_\Omega^\oplus$ and $Z_0=\zeta_0^\oplus$ are
  cyclically symmetric polynomials, with centered aligned seeds
  $h_\Omega$ and $\zeta_0$ of the form
  \begin{equation}
    \label{e.H0.seeds}
      h_\Omega = \frac{\Omega}{2}(q_0^2+p_0^2)
      \end{equation}
      and
      \begin{equation}
      \label{e.Zeta}
      \zeta_0 =\sum_{m=1}^N\zeta_0^{(m)},           \quad
      \zeta_0^{(m)} = b_m\quadr{q_0(q_m+q_{-m})+p_0(p_m+p_{-m})}.
  \end{equation}
  Here $\Omega$ and $b_m$ are defined by
  \begin{equation}
    \label{e.b_m.Omega}
    \Omega :=
    \frac1{2N+1}\sum_{j=-N}^{N+1}\omega_j\ , \quad b_m := \tond{A^{1/2}}_{1,m+1}\ ,
  \end{equation}
  whereas $\omega_j$ are the frequencies of the normal modes of $H_0$.  Moreover,
  there exists a suitable positive constant $C_{\zeta_0}$ such that each component
  $\zeta_0^{(m)}$ satisfies the exponential decay
  \begin{displaymath}
    \norm{\zeta_0^{(m)}}\leq C_{\zeta_0} (2\eps)^m\ ,
  \end{displaymath}
  hence $\zeta_0\in \Dscr\bigl(C_{\zeta_0},2\eps\bigr)$.
\end{proposition}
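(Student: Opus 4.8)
The plan is to proceed in four steps: verify that the proposed change of variables is canonical, compute the transformed quadratic form explicitly, recognise its seed decomposition (together with the vanishing of the Poisson bracket), and finally establish the exponential decay of the off-diagonal entries of $A^{1/2}$.

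First I would note that for $\eps\in(0,\frac12)$ the circulant symmetric matrix $A=\Id-\eps(\tau+\tau^{\top})$ is positive definite: its eigenvalues are $1-2\eps\cos\theta_j$ with $\theta_j=\frac{2\pi j}{2N+1}$, all bounded below by $1-2\eps>0$. Hence the real powers $A^{1/4}$ and $A^{-1/4}$ are well defined, symmetric, and again circulant. The map $(x,y)\mapsto(q,p)=(A^{1/4}x,A^{-1/4}y)$ has block-diagonal matrix $M=\mathrm{diag}(A^{1/4},A^{-1/4})$, and a direct check gives $M^{\top}JM=J$ with $J$ the standard symplectic matrix, using that $A^{1/4}$ is symmetric and $A^{-1/4}=(A^{1/4})^{-1}$; thus the transformation is canonical. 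Substituting $x=A^{-1/4}q$, $y=A^{1/4}p$ into \eqref{e.H0} and using $A^{1/4}A^{1/4}=A^{1/2}$ and $A^{-1/4}AA^{-1/4}=A^{1/2}$ yields at once
\begin{equation*}
H^{(0)}=\frac12\,A^{1/2}q\cdot q+\frac12\,A^{1/2}p\cdot p .
\end{equation*}

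Next I would exploit that $A^{1/2}$ is circulant and symmetric, so $(A^{1/2})_{ij}$ depends only on the cyclic distance $|i-j|$; I write $\Omega$ for the common diagonal entry and $b_m=(A^{1/2})_{1,m+1}$ for the entry at distance $m$. Since all diagonal entries of a circulant matrix coincide, $\Omega=\frac{1}{2N+1}\mathrm{tr}(A^{1/2})=\frac{1}{2N+1}\sum_j\omega_j$, the average of the normal-mode frequencies $\omega_j=\sqrt{1-2\eps\cos\theta_j}$, as claimed. Splitting the two quadratic forms into the diagonal ($m=0$) and off-diagonal ($m\ge1$) contributions and collecting, by translation invariance, all bonds at a fixed distance into a single cyclic orbit, I recover $H^{(0)}=H_\Omega+Z_0$ with $H_\Omega=h_\Omega^{\oplus}$, $Z_0=\zeta_0^{\oplus}$, and the seeds $h_\Omega$, $\zeta_0^{(m)}$ of the stated form; the inclusion $S(\zeta_0^{(m)})\subseteq[-m,m]$ is immediate from the presence of only the indices $0,\pm m$, so $\zeta_0$ is centered aligned. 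Here I would keep careful track of the combinatorial factor produced when $\sum_l\tau^l$ is applied to the bond seed $q_0(q_m+q_{-m})$, using $\sum_l q_lq_{l-m}=\sum_l q_lq_{l+m}$.

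The commutation $\Poi{H_\Omega}{Z_0}=0$ I would read off from the rotational symmetry: the flow of $H_\Omega=\frac{\Omega}{2}\sum_i(q_i^2+p_i^2)$ rotates every pair $(q_i,p_i)$ by the same angle, i.e.\ $q_i+\mathrm{i}p_i\mapsto e^{-\mathrm{i}\Omega t}(q_i+\mathrm{i}p_i)$, while $Z_0$ is built solely from the rotation invariants $q_iq_j+p_ip_j=\re\bigl[(q_i+\mathrm{i}p_i)\overline{(q_j+\mathrm{i}p_j)}\bigr]$; hence $Z_0$ is constant along this flow and the bracket vanishes.

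The main obstacle is the exponential-decay estimate on $b_m$. The plan is to use that $\norm{\eps(\tau+\tau^{\top})}\le 2\eps<1$ in operator norm, so that the binomial series $A^{1/2}=\bigl(\Id-\eps(\tau+\tau^{\top})\bigr)^{1/2}=\sum_{k\ge0}\binom{1/2}{k}(-\eps)^k(\tau+\tau^{\top})^k$ converges. The entry $[(\tau+\tau^{\top})^k]_{1,m+1}$ counts unit-step walks of length $k$ from site $1$ to site $m+1$ on the cycle $\Z_{2N+1}$: since $m\le N$, the wrap-around route has length $2N+1-m\ge m$, so the entry vanishes for $k<m$, while for $k\ge m$ it is bounded by the total number $2^k$ of length-$k$ walks. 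Therefore
\begin{equation*}
|b_m|\le\sum_{k\ge m}\Bigl|\binom{1/2}{k}\Bigr|(2\eps)^k
\le(2\eps)^m\,\sup_{k}\Bigl|\binom{1/2}{k}\Bigr|\sum_{j\ge0}(2\eps)^j
=\frac{C}{1-2\eps}\,(2\eps)^m ,
\end{equation*}
an $N$-independent bound. Since $\zeta_0^{(m)}$ is a homogeneous polynomial of degree two whose coefficients all equal $b_m$ up to sign, its polynomial norm is a fixed multiple of $|b_m|$, which gives $\norm{\zeta_0^{(m)}}\le C_{\zeta_0}(2\eps)^m$ and hence $\zeta_0\in\Dscr(C_{\zeta_0},2\eps)$. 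The only delicate point is precisely this walk-counting on the finite cycle, where one must check that wrap-around walks do not contaminate the low-order coefficients; the inequality $2N+1-m\ge m$ for $m\le N$ guarantees that the vanishing $[(\tau+\tau^{\top})^k]_{1,m+1}=0$ for $k<m$ survives the periodic identification.
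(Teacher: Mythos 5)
Your proposal is correct and follows essentially the same route as the paper's (sketched) proof: the binomial expansion $A^{1/2}=\sum_k\binom{1/2}{k}(-\eps)^k T^k$ with $T=\tau+\tau^{\top}$, the diagonal/off-diagonal splitting $A^{1/2}=\Omega\,\Id+B$, the vanishing of $\tond{T^k}_{1,m+1}$ for $k<m$ together with the $2^k$ walk-counting bound, and the reduction to the first row by circulant symmetry. The additional details you supply (positivity of $A$ and canonicity of the transformation, the trace identity for $\Omega$, the rotation-invariance argument for $\Poi{H_\Omega}{Z_0}=0$, and the explicit attention to the double-counting factor in $\sum_l\tau^l$) are exactly the points the paper omits or defers to Appendix 6.1.1 of \cite{GPP13}.
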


\begin{proof}
We give here only few ideas to grasp the exponential decay of the
all-to-all interactions due to the linear transformation. After
applying $q=A^{1/4} x$, $p=A^{-1/4}y$, we have
\begin{equation}
  \label{e.H0.decomp}
  H_0 = \frac12 p^{\top} A^{1/2} p + \frac12 q^{\top} A^{1/2} q.
\end{equation}
By defining $T:=\tau+\tau^{\top}$, one can rewrite $A^{1/2}$ as
\begin{displaymath}
  A^{1/2} = (\Id - \eps T)^{1/2} = \sum_{l = 0}^{\infty}
  \binom{1/2}{l}(-\eps)^l T^l\ .
\end{displaymath}
In order to obtain the decomposition \eqref{e.lindecomp}, we separate
the diagonal part from the off-diagonal part $A^{1/2} = \Omega \; \Id
+ B$ and insert this decomposition into \eqref{e.H0.decomp}. The
exponential decay $(2\eps)^m$ comes from the observation that
$\tond{T^l}_{1,m+1}=0$ for all $0\leq l< m$ and from the estimate
$|\tond{T^m}_{1,m+1}|\leq 2^m$. One can restrict to consider only the
first raw due to the circulant nature of all the matrices involved
(for all details see Appendix 6.1.1 in \cite{GPP13}).
\end{proof}

\begin{proposition}
\label{l.H1}
Under the linear transformation in Proposition \ref{p.1}, the quartic part
$H_1$ given in (\ref{e.H}) is cyclically symmetric with a centered aligned seed
$H_1 = h_1^\oplus$ given by
\begin{equation}
  \label{e.H1.seed}
h_1 = \sum_{m=0}^N h_1^{(m)}\ .
\end{equation}
Moreover, there exists a suitable positive constant $C_{h_1}$ such that each component
$h_1^{(m)}$ satisfies the exponential decay
\begin{displaymath}
  \norm{h_1^{(m)}}\leq C_{h_1} (2\eps)^m\ ,
\end{displaymath}
hence $h_1\in \Dscr\bigl(C_{h_1},2\eps\bigr)$.
\end{proposition}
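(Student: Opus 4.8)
The plan is to push $H_1$ through the substitution of Proposition \ref{p.1}, read off the seed and its centered aligned decomposition directly from the monomial structure, and reduce the exponential decay to an entrywise estimate on $A^{-1/4}$ that parallels the one already established for $A^{1/2}$.

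First I would note that $H_1=\frac14\sum_j x_j^4$ depends only on the positions, so under $q=A^{1/4}x$ (equivalently $x=A^{-1/4}q$) it becomes $H_1=\frac14\sum_j\tond{\sum_k (A^{-1/4})_{j,k}\,q_k}^4$. Since $A$ is circulant and symmetric, so is $A^{-1/4}$; hence $(A^{-1/4})_{j,k}=v_{k-j}$ for an even sequence $v$, and $A^{-1/4}$ commutes with $\tau$. As the untransformed $H_1$ is manifestly cyclically symmetric and the substitution commutes with $\tau$, the transformed $H_1$ stays cyclically symmetric, with seed taken from the $j=0$ term,
\[
h_1=\frac14\,\tond{\sum_{k=-N}^N v_k\,q_k}^4,\qquad v_k:=(A^{-1/4})_{0,k}.
\]
A direct check that $\tau^l h_1$ reproduces the $j=l$ term then confirms $H_1=h_1^{\oplus}$.

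Next I would build the centered aligned decomposition. Expanding the fourth power, $h_1$ is a sum of monomials $q_{k_1}q_{k_2}q_{k_3}q_{k_4}$ with support $\{k_1,\dots,k_4\}$, and I set $h_1^{(m)}$ to collect all monomials with $\max_i|k_i|=m$. Then $S(h_1^{(m)})\subseteq[-m,m]$ with at least one index equal to $\pm m$, so $h_1=\sum_{m=0}^N h_1^{(m)}$ is centered aligned in the sense of \eqref{e.decomp.cent}. For the norm, bounding over ordered tuples (which absorbs the multinomial coefficients) and using the union bound over the position realizing the maximum — whose factor $4$ cancels the $\frac14$ — I would obtain
\[
\norm{h_1^{(m)}}\leq R^4\,\tond{\sum_{|k|=m}|v_k|}\tond{\sum_{k}|v_k|}^3,
\]
so it remains only to control the boundary factor $\sum_{|k|=m}|v_k|=2|v_m|$ and the total mass $\sum_k|v_k|$.

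The main obstacle — and the only genuinely analytic step — is the exponential decay $|v_m|=|(A^{-1/4})_{0,m}|\leq C(2\eps)^m$, which I would prove exactly as the decay of $A^{1/2}$ in Proposition \ref{p.1}. Writing $T:=\tau+\tau^{\top}$ and $A^{-1/4}=(\Id-\eps T)^{-1/4}=\sum_{l\geq0}\binom{-1/4}{l}(-\eps)^l T^l$, I would use that $T$ has bandwidth one, so $(T^l)_{0,m}=0$ for $l<m$ and $|(T^l)_{0,m}|\leq 2^l$; summing the tail yields $|v_m|\leq\sum_{l\geq m}\bigl|\binom{-1/4}{l}\bigr|(2\eps)^l\leq C(2\eps)^m$, where convergence and the uniform extraction of $(2\eps)^m$ rely on $2\eps<1$ (guaranteed by $\eps\in(0,\tfrac12)$) together with the polynomial growth of the binomial coefficients. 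The same estimate makes $\sum_k|v_k|$ finite and $\eps$-uniformly bounded. Combining, $\norm{h_1^{(m)}}\leq C_{h_1}(2\eps)^m$ with $C_{h_1}:=2R^4 C\tond{\sum_k|v_k|}^3$, which is the claimed decay and shows $h_1\in\Dscr(C_{h_1},2\eps)$.
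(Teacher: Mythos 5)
Your proposal is correct, and it follows exactly the route the paper intends: the paper states Proposition \ref{l.H1} without proof, deferring the details to Appendix 6.1.1 of \cite{GPP13}, but the proof sketch given for Proposition \ref{p.1} is precisely your argument transposed from $A^{1/2}$ to $A^{-1/4}$ --- expand $(\Id-\eps T)^{-1/4}$ in the binomial series, use that $T$ has bandwidth one so $\tond{T^l}_{0,m}=0$ for $l<m$ together with $\bigl|\tond{T^l}_{0,m}\bigr|\leq 2^l$, and sum the tail to get $|v_m|\leq C(2\eps)^m$; your additional counting step for the quartic expansion (ordered tuples absorbing multinomial factors, union bound over the index realizing $\max_i|k_i|=m$) correctly produces the centered aligned components $h_1^{(m)}$ and the bound $\norm{h_1^{(m)}}\leq C_{h_1}(2\eps)^m$. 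Two cosmetic caveats: your constants $C$ and $\sum_k|v_k|$ involve the factor $(1-2\eps)^{-1}$ and so are uniform only for $\eps$ bounded away from $\tfrac12$ (which is harmless, since all subsequent results require $\eps<\eps_*<\tfrac12$, and the paper's own constants in \eqref{const.prop.gen} carry the same $(1-2\eps)$ factors explicitly); and at $m=0$ the boundary factor is $|v_0|$ rather than $2|v_m|$, which only improves your estimate.
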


We can translate Propositions \ref{p.1} and \ref{l.H1} by saying that
in a suitable set of coordinates, the coupling part of the quadratic
Hamiltonian $H_0$ shows all-to-all linear interactions, with an
exponentially decaying strength with respect to the distance between
the sites. Such a linear transformation introduces similar all-to-all
interactions also in the quartic Hamiltonian $H_1$.  Moreover, in the
new coordinates $q_j$, the seed $h_1$ of the quartic term has the same
exponential decay as the seed $\zeta_0$ of the quadratic term does.

\subsection{First-order nonlinear normal form transformation}

With the Hamiltonian $H$ transformed by means of Propositions \ref{p.1} and
\ref{l.H1} into the form
\begin{equation}
  \label{e.H.lintrs}
  H = H_\Omega + Z_0 + H_1\ ,
\end{equation}
we are now ready to state the (first-order) normal form theorem.  This
first-order theorem represents the easiest formulation of the more
generic Theorem~1 of \cite{PP14}. The idea is to perform, by using the
Lie transform algorithm explained in \cite{G03}, one normalizing step,
provided $\eps$ is small enough. Moreover, the normalizing canonical
transformation is well defined in a (small) neighborhood
$B_{\rho^{1/2}}$ of the origin, where $\rho$ is sufficiently small.

\begin{theorem}
\label{prop.gen}
Consider the Hamiltonian $H=h^{\oplus}_{\Omega}+\zeta^{\oplus}_0 +
h^{\oplus}_1$ with seeds $h_{\Omega},\,\zeta_0,\,h_1$, in \eqref{e.H0.seeds}, \eqref{e.Zeta},
and \eqref{e.H1.seed}.  There exist positive $\gamma$, $\eps_{*}<\frac12$ and
$C_*$ such that for every $\eps \in (0,\eps_*)$,
there exists a generating function $\Chi_1=\chi^{\oplus}_1$ of a Lie transform
such that $T_{\Chi_1}\Ham{1}{} = H$, where $\Ham{1}{}$ is a cyclically
symmetric function of the form
\begin{equation}
  \label{e.Ham.r}
  \Ham{1}{} = H_\Omega + Z_0 + Z_1 + {P^{(2)}}\ ,
\end{equation}
with $0 = \{ H_\Omega ,Z_0 \} = \{ H_\Omega ,Z_1 \}$, whereas
$Z_1 = \zeta_1^\oplus$ is a polynomial of degree four with a seed $\zeta_1$ is of class
$\Dscr\left(C_{h_1}, 2\eps\right)$, and $P^{(2)}$ is a remainder that includes
terms of degree equal or bigger than six. Moreover, if the smallness condition on the energy
\begin{equation}
  \label{e.R.sm1}
  \rho<\rho_*:= \frac{1}{96 (1+e) C_*}\ ,
\end{equation}
is satisfied, then the following statements hold true:
\begin{enumerate}
\item $\Chi_1$ defines an analytic canonical transformation on the domain
  $B_{\frac23 {\rho^{1/2}}}$ such that
  \begin{displaymath}
    B_{\frac{1}{3} \rho^{1/2}}\subset T_{\Chi_1} B_{\frac23 {\rho^{1/2}}} \subset B_{\rho^{1/2}}\qquad\qquad
    B_{\frac{1}{3} \rho^{1/2}}\subset T_{\Chi_1}^{-1} B_{\frac23 {\rho^{1/2}}} \subset B_{\rho^{1/2}}\ .
  \end{displaymath}
  Moreover, the deformation of the domain $B_{\frac23 {\rho^{1/2}}}$ is controlled by
  \begin{equation}
    \label{e.def.Tchi}
    z\in B_{\frac23 {\rho^{1/2}}}\qquad\Rightarrow\qquad
    \norm{T_{\Chi_1}(z)-z}\leq 4^4 C_* \rho^{3/2}\ ,\qquad
    \norm{T^{-1}_{\Chi_1}(z)-z}\leq 4^4 C_* \rho^{3/2}\ .
  \end{equation}
\item the remainder is an analytic function on $B_{\frac23 {\rho^{1/2}}}$, and it is
  represented by a series of cyclically symmetric homogeneous polynomials
  $\Ham{1}{s}$ of degree $2s+2$
  \begin{equation}
    \label{e.rem.r}
    P^{(2)} = \sum_{s = 2}^{\infty} \Ham{1}{s} \qquad \Ham{1}{s} =
    \tond{h^{(1)}_s}^{\oplus}\ , \qquad h^{(1)}_s \in \Dscr(2\tilde
    C_*^{s-1}C_{h_1},\sqrt{2\eps}) \ .
  \end{equation}
\end{enumerate}
\end{theorem}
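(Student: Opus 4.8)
The plan is to perform a single step of the Lie transform algorithm of \cite{G03} with a quartic generating function $\Chi_1 = \chi_1^\oplus$, chosen to cancel the non-resonant part of $H_1$ at the quartic level. Write $H^{(0)} := H_\Omega + Z_0$ for the quadratic part (already in normal form by Proposition \ref{p.1}) and let $\lie{g}\,\cdot := \Poi{g}{\cdot}$, so that, with the convention $T_{\Chi_1} = \exp(\lie{\Chi_1})$, one has $H^{(1)} = \exp(-\lie{\Chi_1})H = \sum_{k\geq 0}\frac{(-1)^k}{k!}\lie{\Chi_1}^k H$. Since $\lie{\Chi_1}$ raises the polynomial degree by two, the quadratic part of $H^{(1)}$ is untouched and its quartic part equals $h_1^\oplus - \lie{\Chi_1}H^{(0)}$, while all remaining contributions (the iterated brackets applied to $H^{(0)}$ and every bracket involving $H_1$) have degree at least six. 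Denoting by $\Pi$ the projection onto $\ker\lie{H_\Omega}$ -- the monomials invariant under the uniform rotation generated by $H_\Omega$ -- and by $\Pi^\perp = I - \Pi$ its complement, I would set $\zeta_1 := \Pi h_1$ and solve the homological equation $\lie{H^{(0)}}\chi_1 = \Pi^\perp h_1$; by construction $\zeta_1\in\ker\lie{H_\Omega}$, so $\Poi{H_\Omega}{Z_1}=0$, and the quartic part of $H^{(1)}$ reduces exactly to $Z_1 = \zeta_1^\oplus$.

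The crucial point that makes this homological equation solvable within the resonant scheme is that $\Poi{H_\Omega}{Z_0}=0$ forces $\lie{H_\Omega}$ and $\lie{Z_0}$ to commute, by the Jacobi identity. Consequently the two operators share the splitting of the quartic polynomials into $\ker\lie{H_\Omega}$ and $\mathrm{ran}\,\lie{H_\Omega}$, and both preserve each summand. On the range, $\lie{H_\Omega}$ is invertible with $\norm{(\lie{H_\Omega})^{-1}}\leq \gamma^{-1}$, where $\gamma$ is an order-one spectral gap produced by the single frequency $\Omega$ of $H_\Omega$; this is exactly the statement that no small denominators occur. Since $\lie{Z_0}=\mathcal O(\eps)$, the operator $\lie{H^{(0)}}=\lie{H_\Omega}+\lie{Z_0}$ is invertible on the range for $\eps<\eps_*$ by a Neumann series, and I would take the unique solution $\chi_1 = (\lie{H^{(0)}})^{-1}\Pi^\perp h_1$ lying in $\mathrm{ran}\,\lie{H_\Omega}$. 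Because $\Pi$ merely selects monomials and hence preserves the centered-aligned decomposition, $\zeta_1 = \Pi h_1$ inherits the bounds $\norm{\zeta_1^{(m)}}\leq\norm{h_1^{(m)}}\leq C_{h_1}(2\eps)^m$, giving $\zeta_1\in\Dscr(C_{h_1},2\eps)$; the same extensive-norm estimates for Poisson brackets from Section 3.1 and \cite{PP14}, applied to the Neumann series, keep $\chi_1$ in a comparable class.

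Next I would collect the remainder as $P^{(2)} = \sum_{k\geq 2}\frac{(-1)^k}{k!}\lie{\Chi_1}^k H^{(0)} + \sum_{k\geq 1}\frac{(-1)^k}{k!}\lie{\Chi_1}^k H_1$ and regroup by homogeneity to obtain $P^{(2)}=\sum_{s\geq 2}\Ham{1}{s}$ with $\Ham{1}{s}=(h^{(1)}_s)^\oplus$ of degree $2s+2$. The decay-class estimate $h^{(1)}_s\in\Dscr(2\tilde C_*^{\,s-1}C_{h_1},\sqrt{2\eps})$ would be proved by induction on $s$: each application of $\lie{\Chi_1}$ convolves the interaction ranges and multiplies the decay constant by a fixed factor $\tilde C_*$, which both explains the geometric growth $\tilde C_*^{\,s-1}$ and accounts for the one-time degradation of the decay rate from $2\eps$ to $\sqrt{2\eps}$ absorbed in the exponent. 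Finally, under the smallness condition (\ref{e.R.sm1}) I would close the analytic part. The vector field of $\Chi_1$ satisfies $\ncamp{X_{\Chi_1}}=\mathcal O(\rho^{3/2})$ on $B_{\rho^{1/2}}$, so the standard flow estimate for the time-one map of $X_{\Chi_1}$ yields the deformation bound (\ref{e.def.Tchi}) and the nesting $B_{\frac13\rho^{1/2}}\subset T_{\Chi_1}B_{\frac23\rho^{1/2}}\subset B_{\rho^{1/2}}$; summing $\sum_{s\geq 2}\|\Ham{1}{s}\|^\oplus_{\frac23\rho^{1/2}}$ against the geometric majorant coming from the decay-class bounds shows that the remainder series converges to an analytic function on $B_{\frac23\rho^{1/2}}$.

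I expect the main difficulty to be quantitative rather than conceptual: propagating the $\Dscr(\cdot,\cdot)$ constants and the extensive vector-field norms $\ncamp{\cdot}$ through the iterated Poisson brackets so that a single factor $\tilde C_*$ controls the whole remainder and the explicit threshold $\rho_* = \frac{1}{96(1+e)C_*}$ comes out with the correct powers of $\rho$. The commuting-operator argument -- used to guarantee that $\lie{H^{(0)}}$ is boundedly invertible on $\mathrm{ran}\,\lie{H_\Omega}$, and hence that the resonant seed $\zeta_1=\Pi h_1$ is achieved exactly with no residual non-resonant quartic term left behind -- is the conceptual heart, but it is short once the gap $\gamma$ of $\lie{H_\Omega}$ is available.
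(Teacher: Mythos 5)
Your proposal follows essentially the same route as the paper's proof (which is the first-order case of Theorem 1 of \cite{PP14}, carried out with the Lie-transform algorithm of \cite{G03}): one normalizing step with a quartic generator, the homological equation solved on $\mathrm{ran}\,\lie{H_\Omega}$ by exploiting that $\lie{Z_0}$ commutes with $\lie{H_\Omega}$ and is small enough for a Neumann series --- exactly the content of the paper's condition $f(\eps)>1$ defining $\eps_*$, with $\gamma=2\Omega\bigl(1-\tfrac{1}{2f(\eps)}\bigr)$ the resulting effective gap of $\lie{H^{(0)}}=\lie{H_\Omega}+\lie{Z_0}$ (rather than of $\lie{H_\Omega}$ alone, a harmless relabeling of your $\gamma$) --- and the remainder controlled by propagating the $\Dscr(\cdot,\cdot)$ classes through iterated brackets with the one-time degradation of the decay rate from $2\eps$ to $\sqrt{2\eps}$. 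The quantitative bookkeeping you defer (the factor $\tilde C_*$, the threshold $\rho_*=\frac{1}{96(1+e)C_*}$, and $C_*=\frac{4C_{h_1}}{3\gamma(1-2\eps)\left[1-(2\eps)^{3/4}\right]}$) is precisely what the paper imports from \cite{PP14} and formula (33) of \cite{PP15} rather than reproving.
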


The interval $(0,\eps_*)$ with $\eps_* < \frac{1}{2}$ comes from the inequality
$$
f(\eps):=\tond{\frac{3\Omega}{64C_{\zeta_0}}} \frac{(1-2\eps)
  \quadr{1-(2\eps)^{\frac34}}}{\sqrt{2\eps}}>1
$$
(see for reference formula (33) in \cite{PP15}), and the constants $C_*$ and $\gamma$ can
be written as
\begin{equation}
  \label{const.prop.gen}
  C_* = \frac{4C_{h_1}}{3\gamma(1-2\eps)\quadr{1-(2\eps)^{\frac34}}}
\end{equation}
and
\begin{equation}
  \gamma = 2\Omega\left(1-\frac1{2f(\eps)}\right)
  \quad\Rightarrow\quad
  \Omega<\gamma < 2\Omega.
\end{equation}
Since $\eps$ is sufficiently smaller than $\frac{1}{2}$, the constants $C_*$ is
essentially independent on $\eps$, i.e.
$$
C_* = \mathcal{O}\tond{\frac{C_{h_1}}{\Omega}},
$$
which implies that the same holds true for the threshold $\rho_*$ so that
\begin{equation}
  \label{e.R*.appr}
  \rho_* \approx {\frac{2\Omega}{3C_{h_1}(1+e)}}\ .
\end{equation}

\subsection{Approximation with the dNLS equation}
\label{ss:dNLS.gronw}

We apply here the normal form transformation of Theorem \ref{prop.gen} in
order to approximate the Cauchy problem $\dot z = \{H,z\}$ of the finite dKG equation (\ref{KGlattice})
with a small initial datum $z_0$. Let us denote with $\K := H_\Omega + Z_0 + Z_1$ the
normal form part of the Hamiltonian $H^{(1)} = \K + P^{(2)}$ in formula
\eqref{e.Ham.r}. Since $Z_0$ and $Z_1$ have centered aligned seeds with the
exponential decay, see decompositions \eqref{e.Zeta} and \eqref{e.H1.seed}, we have
\begin{equation}
  \label{e.K.def}
  Z_0 = \sum_{m=1}^N Z_0^{(m)} \ ,
  \qquad\qquad Z_0^{(m)} := \tond{\zeta_0^{(m)}}^\oplus
  \end{equation}
  and
  \begin{equation}
    \label{e.K.def-1}
  Z_1 = \sum_{m=0}^N Z_1^{(m)} \ ,
  \qquad\qquad Z_1^{(m)} := \tond{\zeta_1^{(m)}}^\oplus\ .
\end{equation}
Note that the expansion for $Z_0$ starts at $m=1$, while $Z_1$ starts with
$m=0$. By truncating the $\eps$ expansion of each normal form term $Z_j$ at
their leading orders, we define the \emph{effective normal form Hamiltonian}
$\Keff$ as
\begin{equation}
  \label{e.Keff}
  \Keff := H_\Omega + Z_0^{(1)} + Z_1^{(0)}\ ,\qquad\qquad
  \Kres := \K - \Keff\ .
\end{equation}
As already stressed in \cite{PP14}, the truncated normal form $\Keff$
represents Hamiltonian of the dNLS equation. In complex coordinates $\psi_j=(q_j +
{\rm i} p_j)/\sqrt2$, Hamiltonian $\Keff$ reads as
\begin{equation}
  \label{e.dNLS.Ham}
  \Keff = (\Omega+2b_1)\sum_j|\psi_j|^2
  -b_1\sum_j|\psi_{j+1}-\psi_j|^2 +\frac{3}{8}\sum_j|\psi_j|^4\ ,
\end{equation}
where $b_1=\mathcal{O}(\eps)<0$ is the same as in the expression \eqref{e.b_m.Omega} of
Proposition~\ref{p.1}. The corresponding dNLS equation is
\begin{equation}
  \label{e.dNLS.eqs}
        {\rm i}\dot\psi_j = \derp{\Keff}{\overline\psi_j} = \Omega\psi_j + b_1
        (\psi_{j+1}+\psi_{j-1}) + \frac34 \psi_j|\psi_j|^2\ ,
\end{equation}
and it has the same structure as the dNLS equation (\ref{NLSlattice}).

We denote with $z(t)$ the evolution of the dKG transformed Hamiltonian
$\K+P^{(2)}$, with $z_a(t)$ the evolution of the dNLS model $\Keff$ and
consequently with $\delta(t)$ the error
\begin{equation}
  \label{error-delta}
  \delta(t) := z(t) - z_a(t)\ .
\end{equation}
The two time scales over which we control the error of the approximation are
given by
\begin{equation}
  \label{e.T.scales}
  T_0 := \frac1{\rho} \ ,\qquad\qquad T_0^* :=
  \frac\alpha{\kappa_0\rho}\ln\tond{\frac{1}\rho} \ ,
\end{equation}
where $\alpha\in(0,1)$ is an arbitrary parameter, and $\kappa_0 =
\mathcal{O}(C_{h_1})$ is given in \eqref{e.kappa0}. Similar
definitions are used in \eqref{time-span} and \eqref{time-span-mod},
in the proof of Theorems \ref{theorem-justification} and
\ref{theorem-justification-extended}.

\begin{theorem}
  \label{p.gronw.dNLS}
  Let us take
  $\rho$ fulfilling \eqref{e.R.sm1} and $\eps \in (0,\eps_*)$
  as in Theorem \ref{prop.gen}. Let us first consider the two independent
  parameters $\rho$ and $\eps$ in the regime $\eps^2\ll \rho\leq\eps
  $.  Then, there exists a positive constant $\Cst$ independent of $\rho$ and
  $\eps$ such that for any initial datum $z_0\in B_{\frac23\rho^{1/2}}$
  with $\norm{\delta_0}\leq \rho^{-1/2}\eps^2$, the following holds
  true
  \begin{equation}
    \label{e.est.error.dnls.1}
    \norm{\delta(t)} \leq
      \Cst \rho^{-1/2}\eps^2\ ,\qquad\quad |t|\leq T_0 \ .
  \end{equation}
  Let us now consider the two independent parameters $\rho$ and $\eps$
  in the regime $\eps^{\frac{2}{1+\alpha}}\ll \rho\leq\eps $, where
  $\alpha\in(0,1)$ is arbitrary.  Then, there exists a positive constant $\Cst$
  independent of $\rho$ and $\eps$ such that for any initial datum
  $z_0\in B_{\frac23\rho^{1/2}}$ with $\norm{\delta_0}\leq
  \rho^{-1/2}\eps^2$, the following holds true
  \begin{equation}
    \label{e.est.error.dnls.2}
    \norm{\delta(t)} \leq
      \rho^{-1/2-\alpha}\eps^2, \quad |t|\leq T_0^*\ .
  \end{equation}
\end{theorem}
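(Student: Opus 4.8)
The plan is to run a Gronwall estimate on the error $\delta(t)=z(t)-z_a(t)$ directly in the transformed coordinates, in close parallel to the energy-method proofs of Theorems \ref{theorem-justification} and \ref{theorem-justification-extended}. Writing $\K=\Keff+\Kres$ and using $\dot z = X_{\K}(z)+X_{P^{(2)}}(z)$ together with $\dot z_a=X_{\Keff}(z_a)$, the error obeys
\begin{equation}
  \dot\delta = \bigl[X_{\Keff}(z)-X_{\Keff}(z_a)\bigr]+X_{\Kres}(z)+X_{P^{(2)}}(z).
  \label{e.plan.erroreq}
\end{equation}
First I would split $\Keff=(H_\Omega+Z_0^{(1)})+Z_1^{(0)}$ into its quadratic and quartic parts, so that $X_{\Keff}=L+N$ with $L$ linear and $N$ cubic. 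The decisive structural point is that the quadratic piece $H_\Omega+Z_0^{(1)}$ is, in the complex variables $\psi_j=(q_j+\mathrm{i}p_j)/\sqrt2$, exactly the quadratic part $(\Omega+2b_1)\sum_j|\psi_j|^2-b_1\sum_j|\psi_{j+1}-\psi_j|^2$ of $\Keff$ in (\ref{e.dNLS.Ham}); its flow is generated by a Hermitian operator and therefore preserves $\sum_j|\psi_j|^2=\tfrac12\norm{z}^2$. Consequently $L$ is skew-adjoint for the Euclidean inner product and $\langle\delta,L\delta\rangle=0$.

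Next I would differentiate the squared norm. Testing (\ref{e.plan.erroreq}) against $\delta$ and invoking skew-adjointness of $L$ to annihilate the $\mathcal{O}(\Omega)$ contribution, the only surviving term from $X_{\Keff}$ is the Lipschitz estimate of the cubic map $N$ on $B_{\rho^{1/2}}$, giving $\langle\delta,N(z)-N(z_a)\rangle\leq\kappa_0\rho\norm{\delta}^2$ with $\kappa_0=\mathcal{O}(C_{h_1})$. The two forcing terms are controlled through the vector-field norm (\ref{e.def1}): the residual $\Kres=\sum_{m\geq2}Z_0^{(m)}+\sum_{m\geq1}Z_1^{(m)}$ inherits the exponential decay of Propositions \ref{p.1} and \ref{l.H1}, so, summing the geometric series in $m$, $\ncamp{X_{\Kres}}_{\rho^{1/2}}\lesssim\eps^2\rho^{1/2}+\eps\rho^{3/2}\lesssim\eps^2\rho^{1/2}$, where the last step uses $\rho\leq\eps$; and summing the series (\ref{e.rem.r}) under the smallness condition (\ref{e.R.sm1}) gives $\ncamp{X_{P^{(2)}}}_{\rho^{1/2}}\lesssim\rho^{5/2}\lesssim\eps^2\rho^{1/2}$, again by $\rho\leq\eps$. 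Collecting the estimates yields
\begin{equation}
  \frac{d}{dt}\norm{\delta}\leq\kappa_0\rho\,\norm{\delta}+\Cst\,\eps^2\rho^{1/2},
  \label{e.plan.diffineq}
\end{equation}
valid as long as $z(t)$ and $z_a(t)$ both remain in $B_{\rho^{1/2}}$.

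Integrating (\ref{e.plan.diffineq}) with $\norm{\delta_0}\leq\rho^{-1/2}\eps^2$ produces $\norm{\delta(t)}\leq e^{\kappa_0\rho|t|}\bigl(\norm{\delta_0}+\Cst\kappa_0^{-1}\rho^{-1/2}\eps^2\bigr)$. On $|t|\leq T_0=\rho^{-1}$ the prefactor is at most $e^{\kappa_0}$, which gives (\ref{e.est.error.dnls.1}); on $|t|\leq T_0^*$ one has $\kappa_0\rho|t|\leq\alpha\ln(\rho^{-1})$, hence $e^{\kappa_0\rho|t|}\leq\rho^{-\alpha}$ and the extra factor $\rho^{-\alpha}$ produces (\ref{e.est.error.dnls.2}), with the regime $\eps^{\frac{2}{1+\alpha}}\ll\rho$ ensuring the right-hand side stays $o(\rho^{1/2})$. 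As in the energy method I would phrase this as a continuation (bootstrap) argument: define the maximal time on which $\norm{\delta}$ satisfies the target bound, verify that the integrated estimate strictly improves the bootstrap assumption, and conclude by continuity that the interval of validity fills $[-T_0,T_0]$, respectively $[-T_0^*,T_0^*]$.

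I expect the main obstacle to be twofold. The essential point is the skew-adjointness of $L$: without factoring out the harmonic rotation $H_\Omega+Z_0^{(1)}$, the Lipschitz constant of $X_{\Keff}$ would be $\mathcal{O}(\Omega)=\mathcal{O}(1)$, and Gronwall would survive only on $\mathcal{O}(1)$ times instead of on $\mathcal{O}(\rho^{-1})$ and $\mathcal{O}(\rho^{-1}\ln\rho^{-1})$. The second, more technical difficulty is confining the full orbit $z(t)$ to the domain $B_{\frac23\rho^{1/2}}$ on which the normal form of Theorem \ref{prop.gen} is analytic: the approximate orbit $z_a(t)$ stays in the ball because $\Keff$ conserves the dNLS power $\sum_j|\psi_j|^2$, but for $z(t)$ one must use the almost-invariance of $H_\Omega$ along the flow of $H^{(1)}$ (following from conservation of $H^{(1)}$ and the smallness of $Z_0$, $Z_1$, $P^{(2)}$), which holds on time scales far exceeding $T_0^*$ and is therefore compatible with the bootstrap.
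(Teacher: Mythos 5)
Your proposal is correct and follows essentially the same route as the paper's proof: both exploit that the norm-preserving linear flow (skew-adjointness of $L$ in your version, isometry of $e^{\Lin t}$ generated by $H_L=H_\Omega+Z_0$ in the paper's Duhamel formulation) removes the $\mathcal{O}(\Omega)$ contribution, then bound the Lipschitz constant of the cubic field by $\kappa_0\rho$ and the residual by $\Cst\rho^{1/2}\quadr{\eps^2+\rho\eps+C_*\rho^2}\lesssim\eps^2\rho^{1/2}$ using $\rho\leq\eps$, and close with Gronwall over $T_0$ and $T_0^*$. The only differences are immaterial: you evaluate $X_{\Kres}$ and $X_{P^{(2)}}$ along $z(t)$ (handled by your bootstrap, which in fact confines $z=z_a+\delta$ automatically via the exact norm conservation of $z_a$, so the almost-invariance of $H_\Omega$ is not even needed), whereas the paper evaluates them along $z_a(t)$ and keeps the full $Z_0$ inside $\Lin$.
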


\begin{remark}
The upper bound for the error $\delta$ given in
\eqref{e.est.error.dnls.1} and \eqref{e.est.error.dnls.2} refers to
the time evolution of the normal form \eqref{e.dNLS.Ham} in the
transformed variables $\psi$, which are near-identity deformations of
the original variables $(x,y)$.  Since the transformation $T_\Chi$ is
Lipschitz, with a Lipschitz constant $L$ of order $L=\mathcal{O}(1)$,
the same bound of the error holds also in the original
coordinates. Thus, from the analytic point of view, the nonlinear
deformation of the variables does not affect the dependence of the
estimates on $\rho$ and $\eps$: only the constant $\Cst$ is changed by
the Lipschitz factor $L$.
\end{remark}

\begin{remark}
The above estimates are equivalent, both in terms of error smallness
and time scale, to the ones obtained in Theorems
\ref{theorem-justification} and \ref{theorem-justification-extended},
once the original variables $x_j={\rho^{1/2}}\xi_j$ are recovered.
\end{remark}

\begin{remark}
The requirement $\eps^2\ll\rho$ on the time scale $T_0$ is needed in
order to provide a meaningful approximation, which means that the
error is much smaller than the leading approximation $z_a(t)$
\begin{displaymath}
\norm{\delta(t)}\leq \rho^{-1/2}\eps^2 \ll
\rho^{1/2}\sim\norm{z_a(t)}\ .
\end{displaymath}
The same reason lies behind the requirement
$\eps^{\frac{2}{1+\alpha}}\ll\rho$ on the extended time scale $T_0^*$.
\end{remark}

\begin{proof1}{\bf of Theorem \ref{p.gronw.dNLS}.}
Following a standard approach (see a similar problem in \cite{BCP09}), we
first decompose the Hamiltonian $H = H_L + H_N$ in its quadratic  and quartic parts
$$
H_L := H_\Omega + Z_0, \quad H_N := Z_1+P^{(2)},
$$ so that $\Keff = H_L + H_N - P^{(2)}-\Kres$. Correspondingly, the
vector field is decomposed as $X_H = X_{H_L} + X_N$. Denote the linear operator
for $X_{H_L}$ by $\Lin$.  The equation of motions for $z(t)$ and $z_a(t)$
reads
\begin{equation}
  \label{e.res.1}
  \left\{
  \begin{aligned}
    \dot z &= \Lin z + X_{N}(z)\ ,
    \\
    \dot z_a &= \Lin z_a + X_N(z_a) - \Res(t)\ ,
  \end{aligned}\right.
  \qquad\qquad\text{with}\quad
  \Res(t) := X_{P^{(2)}}(z_a(t)) + X_{\Kres}(z_a(t)) \ .
\end{equation}
The error $\delta(t)$ defined by \eqref{error-delta} satisfies the equation
\begin{equation}
  \label{e.delta.ev}
  \dot \delta = \Lin \delta + \quadr{X_N(z_a+\delta) - X_N(z_a)}
  + \Res(t)\ ,
\end{equation}
whose solution, with the initial value $\delta_0$, is given by Duhamel formula
\begin{equation}
  \label{e.delta.sol}
  \delta(t) = e^{\Lin t}\delta_0 + e^{\Lin t}\int_0^t e^{-\Lin s}
  \quadr{X_N(z_a+\delta) - X_N(z_a) + \Res(s)}ds\ .
\end{equation}
Now, since $\Poi{H_L}{H_\Omega}=0$, one has that $\Lin$ is an isometry. This
allows to estimate
\begin{equation}
  \label{e.int.est}
  \norm{\delta(t)} \leq \norm{\delta_0} + \int_0^t
  \quadr{\norm{X_N(z_a(s)+\delta(s)) - X_N(z_a(s))} +
    \norm{\Res(s)}}ds\ .
\end{equation}
The second term in the r.h.s. can be estimated with the definition of the
residual and using the information that $z_a(t)$ preserves the norm, as a
consequence of the conservation of $H_\Omega$
\begin{displaymath}
  \norm{X_{P^{(2)}}(z_a(s))}\leq \Cst \frac{C_{h_1}  C_*
    \rho^{5/2}}{(1-\sqrt[4]{2\eps})^2} \ ,\qquad
  \norm{X_{\Kres}(z_a(t))}\leq \Cst\frac{\quadr{ C_{\zeta_0}
      \rho^{1/2}\eps^2 + C_{h_1} \rho^{3/2}\eps}}{(1-2\eps)^2} \ ,
\end{displaymath}
where the two contributions in the second inequality come from the truncation
of $Z_0$ and $Z_1$ respectively. Thus, we obtain
\begin{equation}
  \label{e.est.res}
  \norm{\Res(s)} \leq \Cst \frac{\rho^{1/2}}{(1-\sqrt[4]{2\eps})^2}
  \quadr{C_{\zeta_0}\eps^2 + C_{h_1}\rho\eps + C_{h_1} C_*
    \rho^2}\ .
\end{equation}
On the other hand, if
\begin{equation}
  \label{e.delta.zeta}
  \norm{\delta} \ll \norm{z_a}\sim \rho^{1/2}\ ,
\end{equation}
then the increment of the nonlinear field can be well approximated by
\begin{displaymath}
  \norm{X_N(z_a(s)+\delta(s)) - X_N(z_a(s))} \leq
  \norm{X_N'(\zeta_a)}\norm{\delta}\ ,
\end{displaymath}
where
\begin{displaymath}
  \zeta_a:=z_a+\lambda\delta\ ,\qquad \lambda\in (0,1)\ .
\end{displaymath}
If the smallness condition \eqref{e.delta.zeta} for $\delta$ holds,
then $\norm{\zeta_a}\sim \rho^{1/2}$, which implies
\begin{displaymath}
  \norm{X_N(z_a(s)+\delta(s)) - X_N(z_a(s))} \leq
  \norm{X_N'}_\rho\norm{\delta}\ .
\end{displaymath}
By using the decomposition $X_N' = X_{Z_1}'+X_{P^{(2)}}'$ it is possible to
obtain
\begin{equation}
  \label{e.X.N.diff}
  \norm{X_N'}_{\rho^{1/2}} \leq
  \Cst_1\frac{C_{h_1}}{(1-\sqrt[4]{2\eps})^2} \rho \ .
\end{equation}
By inserting \eqref{e.est.res} and \eqref{e.X.N.diff} into
\eqref{e.int.est}, one gets a typical Gronwall-like integral
inequality (see, e.g., Lemma A.2 in \cite{BCP09}), which provides
the time-dependent upper bound
\begin{align}
\nonumber
  \norm{\delta(t)} &\leq e^{\kappa_0\rho t}\norm{\delta_0} + \Cst
  \rho^{1/2} \quadr{\frac{\eps^2}{\rho} + \eps + C_*
    \rho}\tond{e^{\kappa_0\rho t} -1} \\
      \label{e.gronw.3}
      &\leq e^{\kappa_0\rho
    t}\rho^{-1/2}\eps^2 + \Cst\rho^{-1/2} \tond{e^{\kappa_0\rho t}
    -1}\quadr{\eps^2 + \rho\eps + C_*\rho^2}\ ,
\end{align}
where $\kappa_0$ provides an upper bound for $\norm{X_N'}_1$ in
\eqref{e.X.N.diff}
\begin{equation}
\label{e.kappa0}
\kappa_0 := \Cst_1\frac{C_{h_1}}{(1-\sqrt[4]{2\eps_*})^2} =
\mathcal{O}(C_{h_1})
\end{equation}
and $\Cst$ depends only on $\eps_*,\,C_{\zeta_0},\,C_{h_1}$. Then, the
bound \eqref{e.est.error.dnls.1} follows from the assumption $\rho\leq
\eps$.

The bound \eqref{e.est.error.dnls.2} is obtained similarly, just
replacing the time span $T_0^*$ in the above \eqref{e.gronw.3}, which
easily provides the factor $\rho^{-\alpha}$ in front of the estimate.
\end{proof1}

\subsection{Approximations with the generalized dNLS equation}
\label{ss:dNLS.ext.gronw}

The standard dNLS approximation is no more valid when $\eps^2\sim
\rho$. Indeed, in such a case, the contribution $\eps^2\rho^{-1}$
coming from the truncation of the linear field $X_{H_L}$ in
\eqref{e.gronw.3} is of order one, hence the error $\delta(t)$ can be
comparable with the approximation $z_a(t)$
\begin{displaymath}
  \norm{\delta(t)}\leq \Cst \rho^{1/2} \sim \norm{z_a(t)}\ .
\end{displaymath}
In such a regime, it is then necessary to include in the Hamiltonian
$\Keff$ at least the term $Z_0^{(2)}$, responsible for the
next-nearest neighbourhood linear interaction:
\begin{equation}
  \label{e.Keff.ext}
  \Keff := H_\Omega + Z_0^{(1)} +Z_0^{(2)} +Z_1^{(0)}\ .
\end{equation}
Following the same steps as in the proof of
Theorem~\ref{p.gronw.dNLS}, it is possible to prove the following
result, which is fully equivalent to Theorems
\ref{theorem-justification-generalized} and
\ref{theorem-justification-extended-generalized}.

\begin{theorem}
  \label{p.gronw.dNLS-ext}
  Let us
  take $\rho$ fulfilling \eqref{e.R.sm1} and $\eps \in (0,\eps_*)$
  as in Theorem \ref{prop.gen}. Let us first consider the two independent
  parameters $\rho$ and $\eps$ in the regime $\eps^{3}\ll \rho\leq
  \eps^{2}$.  Then, there exists a positive constant $\Cst$ independent on
  $\rho$ and $\eps$ such that for any initial datum $z_0\in
  B_{\frac23\rho}$ with $\norm{\delta_0}\leq \rho^{-1/2}\eps^3$, it
  holds true
\begin{equation}
  \label{e.est.error.dnls.ext.1}
  \norm{\delta(t)} \leq
    \Cst \rho^{-1/2}\eps^3\ ,\qquad\quad |t|\leq T_0 \ .
\end{equation}
  Let us now consider the two independent parameters $\rho$ and $\eps$
  in the regime $\eps^{\frac{3}{1+\alpha}}\ll \rho\leq\eps $, where
  $\alpha\in(0,1)$ is arbitrary.  Then, there exists a positive
  constant $\Cst$ independent of $\rho$ and $\eps$ such that for any
  initial datum $z_0\in B_{\frac23\rho^{1/2}}$ with
  $\norm{\delta_0}\leq \rho^{-1/2}\eps^2$, the following holds true
  \begin{equation}
    \label{e.est.error.dnls.ext.2}
    \norm{\delta(t)} \leq
      \rho^{-1/2-\alpha}\eps^3\ ,\qquad |t|\leq T_0^*\ .
  \end{equation}
\end{theorem}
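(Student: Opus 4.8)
The plan is to repeat the argument of Theorem \ref{p.gronw.dNLS} essentially verbatim, changing only the splitting between the effective dNLS Hamiltonian and its residual. I keep the same decomposition $H = H_L + H_N$ with $H_L := H_\Omega + Z_0$ and $H_N := Z_1 + P^{(2)}$, so that the linear operator $\Lin$ generated by $X_{H_L}$ is unchanged and, because $\Poi{H_L}{H_\Omega}=0$, is still an isometry; the Duhamel representation \eqref{e.delta.sol} and the integral inequality \eqref{e.int.est} for $\norm{\delta(t)}$ therefore hold word for word. The only structural change is that $z_a$ now evolves under the enlarged $\Keff$ of \eqref{e.Keff.ext}, so the residual becomes $\Res(t) = X_{P^{(2)}}(z_a) + X_{\Kres}(z_a)$ with $\Kres := \K - \Keff = \tond{Z_0 - Z_0^{(1)} - Z_0^{(2)}} + \tond{Z_1 - Z_1^{(0)}}$. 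I first note that $z_a$ still preserves the norm: each retained piece $Z_0^{(m)}$ Poisson-commutes with $H_\Omega$ (in the complex coordinates $\psi_j$ of \eqref{e.dNLS.Ham} the seed \eqref{e.Zeta} is a gauge-invariant quadratic form $\propto \re(\overline{\psi_0}\psi_{\pm m})$), hence $\Poi{H_\Omega}{\Keff}=0$ and $\norm{z_a(t)}\sim\rho^{1/2}$ throughout.

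The key step is the sharper residual estimate. Because $Z_0^{(2)}$ is now subtracted, the linear part of $\Kres$ starts at interaction distance $m=3$. Using $\zeta_0\in\Dscr(C_{\zeta_0},2\eps)$ from Proposition \ref{p.1} and summing the geometric tail $\sum_{m\ge 3}(2\eps)^m$, the associated field obeys $\norm{X_{Z_0 - Z_0^{(1)} - Z_0^{(2)}}(z_a)} \leq \Cst\, C_{\zeta_0}\rho^{1/2}\eps^3$, one power of $\eps$ smaller than the $\rho^{1/2}\eps^2$ term that dominated \eqref{e.est.res}. The quartic truncation still contributes $\norm{X_{Z_1 - Z_1^{(0)}}(z_a)} \leq \Cst\, C_{h_1}\rho^{3/2}\eps$, and the remainder field is $\norm{X_{P^{(2)}}(z_a)}\leq \Cst\, C_{h_1} C_* \rho^{5/2}$ as before; in the regime $\rho\leq\eps^2$ both are subdominant, since $\rho^{3/2}\eps\leq\rho^{1/2}\eps^3$ (equivalently $\rho\leq\eps^2$) and $\rho^{5/2}\leq\eps^4\rho^{1/2}\leq\rho^{1/2}\eps^3$. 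Collecting these, $\norm{\Res(s)} \leq \Cst\,\rho^{1/2}\eps^3$ uniformly in the regime $\eps^3\ll\rho\leq\eps^2$.

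With this bound the closure is identical to \eqref{e.gronw.3}. The Lipschitz constant of the nonlinear field is unchanged, $\norm{X_N'}_{\rho^{1/2}}\leq \Cst_1 C_{h_1}(1-\sqrt[4]{2\eps})^{-2}\rho$, so the Gronwall lemma applied to \eqref{e.int.est} yields $\norm{\delta(t)} \leq e^{\kappa_0\rho|t|}\norm{\delta_0} + \Cst\,\rho^{-1/2}\eps^3\tond{e^{\kappa_0\rho|t|}-1}$ with $\kappa_0$ as in \eqref{e.kappa0}. On $|t|\leq T_0=\rho^{-1}$ the exponential is $\mathcal{O}(1)$ and, using the assumed initial bound on $\norm{\delta_0}$, this gives \eqref{e.est.error.dnls.ext.1}; on the extended span $|t|\leq T_0^*$ of \eqref{e.T.scales} the factor $e^{\kappa_0\rho T_0^*}=\rho^{-\alpha}$ produces the extra $\rho^{-\alpha}$ and hence \eqref{e.est.error.dnls.ext.2}. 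Finally, the a priori hypothesis $\norm{\delta}\ll\norm{z_a}\sim\rho^{1/2}$ used to linearize $X_N$ is self-consistent precisely because $\rho^{-1/2}\eps^3\ll\rho^{1/2}$ amounts to $\eps^3\ll\rho$ (respectively $\eps^{3/(1+\alpha)}\ll\rho$ on $T_0^*$), so the usual continuation argument closes the estimate on the whole time interval.

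I expect the main obstacle to be the residual recomputation of the second paragraph: one must verify carefully that enlarging $\Keff$ by $Z_0^{(2)}$ lowers the dominant linear residual from order $\eps^2$ to order $\eps^3$, while checking that the quartic truncation $Z_1 - Z_1^{(0)}$ and the sixth-order remainder $P^{(2)}$ remain subdominant in the new balance $\rho\leq\eps^2$, which is opposite to the one in Theorem \ref{p.gronw.dNLS}. Everything downstream (isometry of $\Lin$, norm conservation of $z_a$, the Gronwall estimate, and the bootstrap) transfers unchanged.
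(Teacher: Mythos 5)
Your proposal is correct and follows exactly the route the paper intends: the paper's own proof of Theorem \ref{p.gronw.dNLS-ext} is just ``the same steps as Theorem \ref{p.gronw.dNLS}'' with the enlarged truncation \eqref{e.Keff.ext}, and your recomputed residual is precisely the general bound \eqref{e.gronw.5} with $l=3$, $n=1$, so that $\rho^{1/2}\quadr{\eps^3/\rho+\eps+C_*\rho}$ is dominated by $\rho^{-1/2}\eps^3$ when $\rho\leq\eps^2$. Your explicit checks (norm conservation of $z_a$ under the new $\Keff$, subdominance of the quartic truncation and of $X_{P^{(2)}}$ in the reversed balance $\rho\leq\eps^2$, and self-consistency of $\norm{\delta}\ll\rho^{1/2}$) fill in details the paper leaves implicit, and they are all correct.
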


The result of Theorem \ref{p.gronw.dNLS-ext} yields Hamiltonian for
the generalized dNLS equation:
\begin{equation}
  \label{e.dNLS.Ham.2}
  \Keff = (\Omega+2b_1+2b_2)\sum_j|\psi_j|^2
  -b_1\sum_j|\psi_{j+1}-\psi_j|^2-b_2\sum_j|\psi_{j+2}-\psi_j|^2
  +\frac{3}{8}\sum_j|\psi_j|^4\ ,
\end{equation}
where $b_2=\mathcal{O}(\eps^2)<0$ is the same as in the expression
\eqref{e.b_m.Omega} of Proposition~\ref{p.1}. The corresponding
generalized dNLS equation is
\begin{equation}
  \label{e.dNLS.eqs.2}
        {\rm i}\dot\psi_j = \Omega\psi_j + b_1(\psi_{j+1}+\psi_{j-1})
        + b_2 (\psi_{j+2}+\psi_{j-2}) + \frac34 \psi_j|\psi_j|^2\ ,
\end{equation}
which has the same structure as the generalized dNLS equation
\eqref{NLSlattice-extended}. Indeed, remembering that $\Omega$ in
\eqref{e.dNLS.eqs.2} also has an expansion in $\eps$, and that the
time variable is rescaled with $\eps$ in \eqref{NLSlattice-extended},
we can rewrite the right-hand-side of the generalized dNLS equation
\eqref{NLSlattice-extended} as follows:
\begin{displaymath}
  \frac{\eps}{2} a_j + (a_{j+1} + a_{j-1}) +
  \frac{\eps}4\tond{a_{j+2} + a_{j-2}} \ .
\end{displaymath}
This shows an $\eps$ correction to the nearest neighbour coefficient, which in
the normal form approach is embedded in the $\eps$-dependence of $\Omega$, $b_1$,
$b_2$ and of the transformed coordinates.

More generally, within the normal form approach, different regimes of
parameters can be treated with no efforts: once the requested scaling between
$\eps$ and $\rho$ is chosen, one easily derives the minimal, and also the
optimal, number of terms in the expansions of $Z_0$ and $Z_1$ to be
included. The estimates follows as easy as before. Here we give
the estimates for a general choice of truncation:
\begin{equation}
  \label{e.Keff.gen1}
  \Keff = H_\Omega + \sum_{j=1}^{l-1} Z_0^{(j)} + \sum_{j=0}^{n-1}
  Z_1^{(j)}\ ,
\end{equation}
where $N\geq l\geq 2$ and $N\geq n\geq 1$.  The error term $\delta$ is now estimated similarly to
\eqref{e.gronw.3} as follows:
\begin{equation}
  \label{e.gronw.5}
  \norm{\delta(t)} \leq e^{\kappa_0\rho t}\norm{\delta_0} + \Cst \rho^{1/2}
  \quadr{\frac{\eps^l}{\rho} + \eps^n + C_*
    \rho}\tond{e^{\kappa_0\rho t} -1}\ ,\qquad l\geq 2\ ,\qquad n\geq 1\ .
\end{equation}
Hence one can deal with all the regimes and with the desired error
precision in a compact and flexible way. The extension to higher order
terms in the nonlinearity would require further steps of the normal
form transformations, thus modifying thresholds $\eps_*$ and $\rho_*$,
following the general version of Theorem \ref{prop.gen} given in
\cite{PP14}.

\section{Applications of the dNLS equation}

We conclude the paper with a brief account of possible applications of the
dNLS equations (\ref{NLSlattice}) and (\ref{e.dNLS.eqs}), and their
generalizations \eqref{NLSlattice-extended} and \eqref{e.dNLS.eqs.2},
in the context of small-amplitude weakly coupled oscillators of the dKG
equation (\ref{KGlattice}).\\

{\bf 1. Existence of breathers.} Breathers of the dKG equation
(time-periodic solutions localized on the lattice) can be
constructed approximately by looking at the discrete solitons of the
dNLS equation (\ref{dnls-introduction}) in the form $a_j(\tau) = A_j e^{i \Omega \tau}$, where
$\Omega \in \mathbb{R} \backslash [-1,1]$ is defined outside the
spectral band of the linearized dNLS equation and ${\bf A} \in \ell^2(\mathbb{R})$ is
time-independent.

The limit $\epsilon \to 0$ is referred to as the anti-continuum limit
of the dKG equation (\ref{KGlattice}), when the breathers at a fixed energy are
continued uniquely from the limiting configurations supported on few
lattice sites \cite{MA94,PelSak}. Compared to the anti-continuum
limit, the dNLS approximation is very different, because the discrete
solitons of the dNLS equation (\ref{dnls-introduction}) are not nearly
compactly supported due to the fact that the dNLS equation
(\ref{dnls-introduction}) has no small parameter. In agreement with
this picture, the continuation arguments in \cite{MA94,PelSak} are not
valid in the small-amplitude approximation, when the breather period
$T$ is defined near the linear limit $2 \pi$, because the inverse
linearized operators become unbounded in the linear oscillator limit
as $T \to 2 \pi$.

By Theorem \ref{theorem-justification}, discrete solitons of the dNLS
equation (\ref{dnls-introduction}) are continued as approximate
breather solutions of the dKG equation (\ref{KGlattice}), which are
only periodic solutions up to the time scale
$\mathcal{O}(\epsilon^{-1})$.  However, this approximation can be
extended to all times, by considering time-periodic solutions of the
dKG equation (\ref{KGlattice}) with small $\epsilon$, using Fourier
series in time, and eliminating all but the first Fourier harmonic by
a Lyapunov--Schmidt reduction procedure. Then, the components of the
first Fourier component satisfies a stationary dNLS-type equation,
where the dNLS equation (\ref{dnls-introduction}) is the leading
equation. In this way, similarly to the work \cite{PSM}, one can
justify continuation of discrete solitons of the dNLS equation
(\ref{dnls-introduction}) as approximate solutions of the true
breathers in the dKG equation (\ref{KGlattice}).

Within the same scheme of Lyapunov--Schmidt decomposition, another
equivalent route to prove the existence of breather for the dKG
equation (\ref{KGlattice}) is obtained by means of
Theorem~\ref{prop.gen}. Indeed, the discrete solitons of the dNLS
equation (\ref{dnls-introduction}) can be characterized as constrained
critical points of energy, which are continued, under non-degeneracy
conditions, to critical points of the true energy of the dKG equation
(\ref{KGlattice}).\\

{\bf 2. Spectral stability of breathers.} Spectral stability of
breathers in the dKG equation (\ref{KGlattice}) can be related to the
spectral stability of solitons in the dNLS equation
(\ref{dnls-introduction}).  By Theorem \ref{theorem-justification}, we
are not able to relate stable or unstable eigenvalues of the dNLS solitons
with the Floquet multipliers of the dKG breathers, because the error
term also grows exponentially at the time scale
$\mathcal{O}(\epsilon^{-1})$ (the same problem is discussed in
\cite{DumaPel} in the context of stability of the travelling waves in
FPU lattices).  However, by Theorem
\ref{theorem-justification-extended} obtained on the extended time
scale $\mathcal{O}(\epsilon^{-1} |\log(\epsilon)|)$, we can conclude
that all unstable eigenvalues of the dNLS solitons persist as unstable
Floquet multipliers of the dKG breathers within the
$\mathcal{O}(\epsilon)$ distance from the unit circle.

If unstable eigenvalues of the dNLS solitons do not exist, we only
obtain approximate spectral stability of the dKG breathers, because
unstable Floquet multipliers of the dKG breathers may still exist on
the distance smaller the $\mathcal{O}(\epsilon)$ to the unit circle.
On the other hand, if the spectrally stable dNLS solitons are known to
have internal modes \cite{PelSak11}, then the Floquet multipliers of
the dKG breathers persist on the unit circle by known symmetries of
the Floquet multipliers \cite{PelSak}. \\

{\bf 3. Long time stability of breathers.} By means of the normal form
approach, it is possible to prove the long time stability result for
single-site (fundamental) breather solutions of the dKG equation
(\ref{KGlattice}). Indeed, the variational characterization of the
existence problem for such breathers in the normal form essentially
implies an orbital stability in the normal form, which is translated
into a long time stability in the original dKG equation \cite{PP15}.

In the case of multi-site dNLS solitons, nonlinear instability is
induced by isolated internal modes of negative Krein signature, which
are coupled with the continuous spectrum by nonlinearity \cite{KPS15}.
By using the extended time scale $\mathcal{O}(\epsilon^{-1}
|\log(\epsilon)|)$ of Theorem \ref{theorem-justification-extended}, we
can predict persistence of this instability for small-amplitude dKG
breathers.  This was recently confirmed for multi-site dKG
breathers in \cite{CKP15}.

Also quasi-periodic localized solutions were constructed for the dNLS
equation in the situation, when the internal mode of the dNLS soliton
occurs on the other side of the spectral band of the continuous
spectrum \cite{Cuccagna,Maeda}. These solutions correspond
approximately to quasi-periodic dKG breathers. It
is still an open question to consider true quasi-periodic breather
solutions of the dKG equation (\ref{KGlattice}).\\

{\bf 4. Asymptotic stability of breathers.} Asymptotic stability of
dNLS solitons supported at a single-site linear potential was
considered in \cite{CT09,KPS09,MP12} within the dNLS equation with the
seventh-order power nonlinearity. Again, these results apply to the
dKG equation only approximately on the extended time scale
$\mathcal{O}(\epsilon^{-1} |\log(\epsilon)|)$ by Theorem
\ref{theorem-justification-extended}.  However, by using canonical
transformations and dispersive decay estimates, asymptotic stability
of single-site breathers was proved recently in \cite{Bambusi2}, also
in the case of the seventh-order power nonlinearity.

The multi-site dKG breathers with isolated internal modes of positive
Krein signature, which are coupled to the spectral band of the
continuous spectrum due to nonlinearity, are also expected to remain
nonlinearly (and, perhaps, asymptotically) stable \cite{CKP15}.  It
remains however an open problem to study this problem directly for the dKG equation (\ref{KGlattice}),
without rescuing to the approximation result of Theorem \ref{theorem-justification-extended}.

\vspace{0.25cm}

{\bf Acknowledgements:} The work of D.P. is supported by the
Ministry of Education and Science of Russian Federation (the base part
of the state task No. 2014/133, project No. 2839). The work of T.P. and S.P. is partially
supported by the MIUR-PRIN program under the grant 2010 JJ4KPA
(``Teorie geometriche e analitiche dei sistemi Hamiltoniani in
dimensioni finite e infinite'').

\end{document}